\numberwithin{equation}{section}
\newtheorem{thm}{Theorem}[section]
\newtheorem{prop}[thm]{Proposition}
\newtheorem{lemm}[thm]{Lemma}
\newtheorem{cor}[thm]{Corollary}
\newtheorem{defi}[thm]{Definition}
\newcommand{\Qlb}{\overline{\mathbb{Q}}_\ell}
\newcommand{\Hom}{\mathrm{Hom}}
\newcommand{\Perv}{\mathrm{Perv}}
\newcommand{\Rep}{\mathrm{Rep}}
\newcommand{\LocSys}{\mathrm{LocSys}}
\newcommand{\Sat}{\mathrm{Sat}}
\newcommand{\Hck}{\mathcal{H}\mathrm{ck}}
\newcommand{\Div}{\mathrm{Div}}
\newcommand{\ULA}{\mathrm{ULA}}
\newcommand{\Perf}{\mathrm{Perf}}
\newcommand{\et}{\mathrm{\acute{e}t}}
\newcommand{\Det}{D_{\mathrm{\acute{e}t}}}
\newcommand{\Sh}{\mathrm{Sh}}
\newcommand{\Shet}{\mathrm{Sh}_{\mathrm{\acute{e}t}}}
\newcommand{\Gr}{\mathrm{Gr}}
\newcommand{\Wdisj}{\text{$W$-disj}}
\DeclareMathOperator{\Spec}{Spec}
\DeclareMathOperator{\Spd}{Spd}
\title{\Large{Two monoidal structures on Satake category in mixed characteristic}}
\author{Katsuyuki Bando\footnote{Katsuyuki Bando, Research Institute for Mathematical Sciences, Kyoto University, Kyoto, Japan, \texttt{kbando@kurims.kyoto-u.ac.jp}}}
\begin{document}
\maketitle
\abstract{Fargues and Scholze proved the geometric Satake equivalence over the Fargues--Fontaine curve.
This can be transferred to the geometric Satake equivalence concerning a Witt vector affine Grassmannian via a nearby cycle.
On the other hand, Zhu proved the geometric Satake equivalence concerning a Witt vector affine Grassmannian.
In this paper, we explain the coincidence of these two geometric Satake equivalences, including the coincidence of the two symmetric monoidal structures on the Satake category.}
\section{Introduction}
The geometric Satake equivalence is the equivalence between the category of equivariant perverse sheaves on the affine Grassmannian of a reductive group and the category of finite-dimensional algebraic representations of its dual group.
This equivalence connects geometric objects with representation-theoretic objects, so it is important in geometric representation theory.
In particular, it is closely related to the geometric Langlands program.
Therefore, considering the geometric Satake equivalence in mixed characteristic is also important in number theory.
In \cite{Zhumixed}, Zhu proved the geometric Satake equivalence in mixed characteristic using a ring of Witt vectors.
This plays an important role in various representation theoretic or number theoretic results.
On the other hand, in \cite{FS}, Fargues--Scholze proved the geometric Satake equivalence in mixed characteristic using the Fargues--Fontaine curve, in a form that makes its relation to the categorical Langlands conjecture clear.
We discuss the relation between Zhu's and Fargues--Scholze's geometric Satake equivalence.
In this way, various results proved using Zhu's equivalence can be transferred to Fargues--Scholze's context.

Let $F$ be a $p$-adic local field with a residue field $\mathbb{F}_q$.
Write $\mathcal{O}_F$ for its ring of integers.
Put $k:=\overline{\mathbb{F}}_q$.
Let $G$ be a reductive group scheme over $\mathcal{O}_F$.
Let $\ell$ be a prime not equal to $p$.
Write $\widehat{G}$ for the Langlands dual group of $G$ over $\Qlb$.

In \cite{Zhumixed}, Zhu defined a Witt vector affine Grassmannian \[
\Gr_G=\Gr^{\mathrm{Witt}}_{G,\Spec k}
\]
as a geometrization of $G(W(k)\otimes_{W(\mathbb{F}_q)}F)/G(W(k)\otimes_{W(\mathbb{F}_q)}\mathcal{O}_F)$, which is an ind-projective ind-perfect scheme over $k$.
Let $\Perv_{L^{+,\mathrm{Witt}}G}(\Gr_{G,\Spec k}^{\mathrm{Witt}},\Qlb)$ be the category of $L^{+,\mathrm{Witt}}G$-equivariant perverse sheaves, where $L^{+,\mathrm{Witt}}G$ is a positive loop group over $k$, a geometrization of $G(W(k)\otimes_{W(\mathbb{F}_q)}\mathcal{O}_F)$.
Let $\Rep(\widehat{G},\Qlb)$ be the category of finite dimensional representations of $\widehat{G}$ over $\Qlb$.
According to \cite{Zhumixed}, there is a symmetric monoidal structure on $\Perv_{L^{+,\mathrm{Witt}}G}(\Gr_{G,\Spec k}^{\mathrm{Witt}},\Qlb)$ and there is a symmetric monoidal equivalence called the geometric Satake equivalence:
\[
\Perv_{L^{+,\mathrm{Witt}}G}(\Gr_{G,\Spec k}^{\mathrm{Witt}},\Qlb)\overset{\sim}{\to} \Rep(\widehat{G},\Qlb).
\]
Using notions of diamonds, we can rephrase this result as a symmetric monoidal equivalence
\begin{align}\label{eqn:geomSatSpdk}
\Sat(\Hck_{G,\Spd k},\Qlb)\overset{\sim}{\to} \Rep(\widehat{G},\Qlb),
\end{align}
where $\Hck_{G,\Spd k}$ is a local Hecke stack over $\Spd k$, and $\Sat$ denotes a Satake category, see \cite[\sc{Definition} VI.1.6, \S VI.7.1]{FS}.
On the other hand, there is a symmetric monoidal equivalence
\begin{align}\label{eqn:SatSpdC}
\Sat(\Hck_{G,\Spd C},\Qlb)\overset{\sim}{\to} \Rep(\widehat{G},\Qlb),
\end{align}
where $C$ is the completion of an algebraic closure of $F$.
This can be proved by the same argument as the geometric Satake equivalence in \cite[\S VI]{FS}.
More precisely, the construction of the symmetric monoidal structure of the fiber functor can be proved by a parallel argument to the case of $\Div^1$ in \cite[\S VI]{FS}:
\begin{prop}[Proposition \ref{prop:F1monoidal}]\label{prop:F1monoidalintro}
There exists a canonical symmetric monoidal structure of 
\[
F_{\Spd C}^1:=H^*(\Gr_G,-)\colon \Sat(\Hck_{G,\Spd C},\Qlb)\to \mathrm{Vect}_{\Qlb},
\]
where $\mathrm{Vect}_{\Qlb}$ is the category of finite dimensional $\Qlb$-vector spaces.
\end{prop}
The identification of the Tannakian group with $\widehat{G}$ follows from the case of $\Div^1$, and we obtain (\ref{eqn:SatSpdC}).
By transferring this via a nearby cycle, we have another symmetric monoidal structure on $\Sat(\Hck_{G,\Spd k},\Qlb)$ and another symmetric monoidal equivalence of the same form as (\ref{eqn:geomSatSpdk}), see \cite{B}.
The main theorem is the following:
\begin{thm}\label{thm:main}
The symmetric monoidal structure on the category 
\[
(\Sat(\Hck_{G,\Spd k},\Qlb),\star)
\]
in \cite{B} coincides with the one in \cite{Zhumixed}.
Moreover, the geometric Satake equivalence
\[
\Sat(\Hck_{G,\Spd k},\Qlb)\overset{\sim}{\to} \Rep(\widehat{G},\Qlb)
\]
in \cite{B} coincides with one in \cite{Zhumixed}.
\end{thm}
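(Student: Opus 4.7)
The plan is to reduce the problem to a compatibility between two fusion constructions, then resolve this compatibility by producing a single geometric model that realizes both. Observe first that the bifunctor $\star$ underlying both monoidal structures on $\Sat(\Hck_{G,\Spd k},\Qlb)$ is the same: in both \cite{B} and \cite{Zhumixed} it is the convolution product defined by pushforward along the composition morphism of the local Hecke stack, which is intrinsic to $\Hck_{G,\Spd k}$. What differs a priori are the associativity and commutativity constraints, which in both papers are produced by a fusion argument — in \cite{Zhumixed} from Zhu's Witt-vector Beilinson--Drinfeld Grassmannian with two legs moving over $\Spec k$, and in \cite{B} from the Fargues--Scholze two-leg Hecke stack over $\Spd\widehat{\ol{F}}$ transported to $\Spd k$ by nearby cycles. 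The theorem amounts to asserting that these two fusion constraints coincide.

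To establish this I would construct a Beilinson--Drinfeld-type Hecke stack $\Hck_{G,\Spd\cal{O}_F}^{I}$ for every finite set $I$, living over a base that has generic point $\Spd\widehat{\ol{F}}$ and special point $\Spd k$. Its restriction to the generic stratum should yield the two-leg Hecke stack used in \cite{B} for the Fargues--Scholze fusion, and its restriction to the special stratum should recover, via the equivalence between certain v-sheaves and perfect ind-schemes over $k$, Zhu's Witt-vector BD Grassmannian from \cite{Zhumixed}. The specialization/nearby cycle functor between the two strata then tautologically intertwines the two fusion constraints. Concretely this reduces to identifying two descriptions of the same modification problem for $G$-bundles — one via vector bundles on the Fargues--Fontaine curve, one via Witt-vector lattices — and verifying that pushforward of $\ULA$ sheaves along the factorization morphism behaves compatibly under the resulting nearby-cycle identification.

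Once the symmetric monoidal structures are shown to match, the two Satake equivalences differ at most by a symmetric monoidal automorphism of $(\Rep(\wh G,\Qlb),\otimes)$, that is, by an automorphism of $\wh G$. Both constructions rely on the same underlying fiber functor — global intersection cohomology, or equivalently the stalk at the base point of $\Hck_{G,\Spd k}$ — and both identify the IC sheaf of the Schubert variety associated to a dominant coweight $\mu$ with the irreducible representation of highest weight $\mu$ via the Mirkovic--Vilonen construction. This pins down the automorphism to be the identity and completes the comparison.

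The main obstacle is the construction of the common two-leg model. The Witt-vector side of \cite{Zhumixed} lives in the category of perfect ind-schemes over $k$, while the Fargues--Scholze side is formulated in small v-stacks; matching them along a single family whose generic and special fibers yield the two fusion structures requires careful passage through the diamondification of perfect schemes, and a verification that Zhu's specialization map agrees with the diamond-theoretic nearby cycle. Once this bridge is built the rest of the argument is essentially formal, but establishing it — together with the compatibility of $L^{+}G$-equivariance structures across the two worlds — is where the real technical content of the theorem lies.
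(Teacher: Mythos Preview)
Your proposal rests on a premise that does not hold: you assert that in \cite{Zhumixed} the commutativity constraint is produced by a fusion argument from a Witt-vector Beilinson--Drinfeld Grassmannian with two legs moving over $\Spec k$. There is no such object in Zhu's paper. The whole point of the mixed-characteristic setting is that over $\Spec k$ there is no curve along which legs can deform, so no geometric fusion is available; Zhu instead obtains the commutativity constraint from a numerical argument via the affine Hecke algebra (the paper notes this explicitly in the introduction). Likewise, Zhu's monoidal structure on the fiber functor is not built by fusion but by equivariant cohomology: one has a canonical isomorphism $H^*(\Hck,A)\otimes_{R_G}H^*(\Hck,B)\cong H^*(\Hck,A\star B)$, and applying $\Qlb\otimes_{R_G}(-)$ gives the monoidal structure on $F$. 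So the ``common two-leg model that specializes to both fusion constructions'' you propose cannot exist as stated, because on the Zhu side there is no fusion construction to specialize to.

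The paper's argument is therefore quite different in shape. First, by faithfulness of $F$, all of the constraints (associativity, unit, commutativity) are determined by the monoidal structure on $F$, so everything reduces to comparing two isomorphisms $F(A\star B)\cong F(A)\otimes F(B)$. The Zhu isomorphism is the equivariant one just described; the \cite{B}/Fargues--Scholze isomorphism comes from the fusion product $A\ast B$ over $(\Spd C)^2$ and is characterized over the open locus $(\Spd C)^2_{\neq}$ by an exterior product formula. The key technical input is the lemma that for a constant sheaf $A$ on $(\Spd C)^2$ the unit $A\to R^0j_{\neq *}j_{\neq}^*A$ is an isomorphism, hence restriction to $(\Spd C)^2_{\neq}$ is injective on the relevant cohomology groups. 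One then writes down, for $x\in H^*(\Hck,A)$ and $y\in H^*(\Hck,B)$, a single class $\overline{x\ast y}$ on $(\Spd C)^2$ whose restriction to the diagonal computes Zhu's map and whose restriction to $(\Spd C)^2_{\neq}$ agrees with the exterior class $\overline{x}\boxtimes\overline{y}$; the injectivity lemma forces these to match, giving the explicit formula $1\otimes x\otimes y\mapsto \overline{x}\otimes\overline{y}$. No global family over $\Spd\cal{O}_F$ interpolating two fusion constructions is needed or used.
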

The fact that Fargues--Scholze's geometric Satake equivalence gives a new proof of Zhu's geometric Satake equivalence is mentioned in \cite[\sc{Remark} I.2.14]{FS}, but it is not mentioned how to show the compatibility of the two symmetric monoidal structures.
This is a nontrivial question since the methods of constructing the two symmetric monoidal structures are quite different. We show this compatibility (and also Proposition \ref{prop:F1monoidalintro}) by using the following lemma:
Put
\[
(\Spd C)_{\Wdisj}^2:=(\Spd C)^2\setminus (\Spd C\times_{\Div^1}\Spd C).
\]
Note that $\Spd C\times_{\Div^1}\Spd C\cong \Spd C\times W_F$ holds since $\Div^1$ is the quotient of $\Spd C$ by the action of the Weil group $W_F$ of $F$ in \cite[\S IV.7]{FS}.
\begin{lemm}\label{lemm:ffjneqintro}
Let
\[
j_{\Wdisj,(\Spd C)^2}\colon (\Spd C)_{\Wdisj}^2\hookrightarrow (\Spd C)^2
\]
be the open immersion.
Let $A\in \Shet((\Spd C)^2,\Lambda)$ be a constant sheaf of a finitely generated $\Lambda$-module, where $\Shet(-,\Lambda)$ is the full subcategory of $\Det(-,\Lambda)$ consisting of complexes concentrated in degree 0.
Then the natural map
\[
A \to R^0j_{\Wdisj,(\Spd C)^2*}j_{\Wdisj,(\Spd C)^2}^*A
\]
is an isomorphism.
\end{lemm}
By this lemma, the calculation of each monoidal structure reduces to the calculation over $(\Spd C)^2_{\Wdisj}$, and this can be done using some exterior products.

Consequently, our result gives a geometric construction of the commutativity constraint of the monoidal structure in \cite{Zhumixed}, which is constructed in \cite{Zhumixed} using a numerical result for the affine Hecke algebra.
\section*{Acknowledgements}
The author is really grateful to his advisor Naoki Imai for useful discussions and comments.
He also thanks the referees for their careful reading of the paper and valuable suggestions and comments.

\section{Notation}
\subsection{Notation on diamonds}
Throughout this paper, $F$ is a $p$-adic local field with a residue field $\mathbb{F}_q$.
Write $\mathcal{O}_F$ for its ring of integers.
Put $k:=\overline{\mathbb{F}}_q$.
We write $C$ for the completion of an algebraic closure of $F$, and $\breve{F}$ for the completion of the maximal unramified extension of $F$ in $C$.

Let $\Perf_k$ be a category of perfectoid spaces over $k$.
When we consider diamonds, or more generally, v-stacks, we work on $\Perf_k$.
As in \cite[\sc{Definition} VI.1.1]{FS}, put
\begin{align*}
\Div^d_{\mathcal{Y}}&:=(\Spd \mathcal{O}_{\breve{F}})^d,\\
\Div^d&:=(\Spd \breve{F}/\varphi^{\mathbb{Z}})^d/\Sigma_d,
\end{align*}
where $\Sigma_d$ is the symmetric group and $\varphi$ is the Frobenius.
Let $G$ be a reductive group over $\Spec \mathcal{O}_F$.
For a small v-stack $S$ with a map $S\to \Div^d$ or $S\to \Div^d_{\mathcal{Y}}$, we get the corresponding divisor $D_S$ on the Fargues--Fontaine curve $X_S$ or $\mathcal{Y}_S$, respectively, which is defined for example in \cite[\S 2]{FS}.
Let $B^+_{\Div^d}(S)$ (resp. $B^+_{\Div^d_{\mathcal{Y}}}(S)$) be the global section of the completion of $X_S$ (resp. $\mathcal{Y}_S$) along $D_S$.
Put $B_{\Div^d_{(\mathcal{Y})}}(S)=B^+_{\Div^d_{(\mathcal{Y})}}(S)[1/\mathcal{I}_S]$, where $\mathcal{I}_S$ is the ideal corresponding to $D_S$.
Then the following spaces are defined as in \cite[\S VI.1]{FS}:\\
The positive loop space and the loop space of $G$ over $\Div^d_{(\mathcal{Y})}$ are defined by
\begin{align}
L^+_{\Div^d_{(\mathcal{Y})}}G\ \colon\ \Perf_k\to \mathrm{Sets},\ S\mapsto G(B^+_{\Div^d_{(\mathcal{Y})}}(S)),\\
L_{\Div^d_{(\mathcal{Y})}}G\  \colon\ \Perf_k\to \mathrm{Sets},\ S\mapsto G(B_{\Div^d_{(\mathcal{Y})}}(S)).
\end{align}
For a v-stack $S$ over $\Div^d_{(\mathcal{Y})}$, put $L^+_S:=L^+_{S/\Div^d_{(\mathcal{Y})}}G:=L^+_{\Div^d_{(\mathcal{Y})}}G\times_{\Div^d_{(\mathcal{Y})}}S$ and $L_S:=L_{S/\Div^d_{(\mathcal{Y})}}G:=L_{\Div^d}G\times_{\Div^d_{(\mathcal{Y})}}S$.
The Beilison--Drinfeld affine Grassmannian of $G$ over $S$ is 
\[
\Gr_{G,S}=\Gr_{G,S/\Div^d_{(\mathcal{Y})}}:=[L_SG/L^+_SG],
\]
and the local Hecke stack of $G$ over $S$ is 
\[
\Hck_{G,S}=\Hck_{G,S/\Div^d_{(\mathcal{Y})}}:=[L^+_SG\backslash L_SG/L^+_SG].
\]
Moreover, we also define the $m$-th congruence subgroup $L^+_{\Div^d_{(\mathcal{Y})}}G^{\geq m}$ as the subgroup of  $L^+_{\Div^d_{(\mathcal{Y})}}G$ corresponding to the kernel of the homomorphism
\[
G(B^+_{\Div^d_{(\mathcal{Y})}}(S))\to G(B^+_{\Div^d_{(\mathcal{Y})}}(S)/\mathcal{I}_S^{m+1}).
\]
Put $L^+_SG^{\geq m}=L^+_{\Div^d_{(\mathcal{Y})}}G^{\geq m}\times_{\Div^d_{(\mathcal{Y})}}S$.
\subsection{Notation on categories}
Let $\ell$ be a prime number not equal to $p$.
Let $\Lambda$ be either $\Qlb$, a finite extension $L$ of $\mathbb{Q}_{\ell}$, its ring of integers $\mathcal{O}_L$, or its quotient ring.
If $\Lambda$ is a torsion ring, then the derived category $\Det(X,\Lambda)$ for a small v-stack $X$ and the six functors with respect to this formalism are defined as in \cite[Definition 1.7]{Sch}.
We define $\Det^{\ULA}(\Hck_{G,S},\Lambda)\subset \Det(\Hck_{G,S},\Lambda)$ as the full subcategory consisting of ULA sheaves, see \cite[\sc{Definition} VI.6.1, \sc{Definition} IV.2.1.]{FS}.
Let 
\[
\Sat(\Hck_{G,S},\Lambda)\subset \Det^{\ULA}(\Hck_{G,S},\Lambda)
\]
denote the Satake category, see \cite[\sc{Definition} VI.7.8]{FS}.
Even if $\Lambda$ is not a torsion ring, we have an ad hoc definition of $\Det(X,\Lambda)$, see \cite[Proposition 26.2]{Sch} and \cite{B}, i.e.
\begin{itemize}
\item If $\Lambda=\mathcal{O}_L$, then $\Det(X,\mathcal{O}_L)=\displaystyle\lim_{\substack{\longleftarrow\\ n}} \Det(X,\mathcal{O}_L/\ell^n)$.
\item If $\Lambda=L$, then $\Det(X,L)=\Det(X,\mathcal{O}_L)[\ell^{-1}]$, which is obtained by inverting $\ell$ in Hom-sets.
\item If $\Lambda=\Qlb$, then $\Det(X,\Qlb)=\displaystyle\lim_{\substack{\longrightarrow\\ L}} \Det(X,L)$.
\end{itemize}
We can define $\Det^{\ULA}(\Hck_{G,S},\Lambda)$ by changing $\Det$ to $D^{\ULA}$ in all the terms in the (co)limits.
Similarly, we define $\Sat(\Hck_{G,S},\Lambda)$.
\subsection{Notation on geometric Satake}
For a v-stack $S$ over $\Div^d_{(\mathcal{Y})}$, there exists a monoidal structure on $\Det(\Hck_{G,S})$ called the convolution product defined as follows:
Put
\begin{align*}
\Hck_{G,S}^{\text{conv}}&:=L^+_{S}G\backslash L_{S}G\times^{L^+_{S}G}L_{S}G/L^+_{S}G\\
&=\Hck_{G,S}\times_{\pi_{2,2},S/L^+_SG,\pi_{1,2}}\Hck_{G,S}.
\end{align*}
There exists a canonical map
\begin{align}\label{eqn:defofconv1}
\mathrm{conv}_{1,S}\colon \Hck_{G,\Spd C}^{\text{conv}}&\to \Hck_{G,S}
\end{align}
induced by the multiplication map $L_SG\times_S L_SG\to L_SG$.
Moreover, there are natural maps
\begin{equation}\label{eqn:defofa'1}
\begin{split}
a'_1\colon \Hck_{G,S}^{\text{conv}}&=\Hck_{G,S}\times_{\pi_{2,2},S/L^+_SG,\pi_{1,2}}\Hck_{G,S}\\
&\to \Hck_{G,S}\times_{S}\Hck_{G,S}.
\end{split}
\end{equation}
For $A,B\in D(\Hck_{G,S},\Lambda)$ The convolution product is defined by 
\[
A\star B:=R(\mathrm{conv}_{1,S})_*a_1^*(A\boxtimes B).
\]

Let
\begin{equation}\label{eqn:defofF}
\begin{split}
F&:=F^d:=F_{G,S}:=F^d_{G,S}\\
&:=\bigoplus_{i\in \mathbb{Z}}\mathcal{H}^i(R\pi_{G,S*})\colon \Sat(\Hck_{G,S})\to \LocSys(S,\Lambda),
\end{split}
\end{equation}
where the functor 
\[
R\pi_{G,S*}\colon \Sat(\Hck_{G,S})\to \Det(S,\Lambda)
\]
is the composition of the pullback to $\Gr_{G,S}$ and pushforward along 
\[
\pi_{G,S}\colon \Gr_{G,S}\to S,
\]
and $\LocSys(S, \Lambda)$ is the full subcategory of $\Det(S,\Lambda)$ consisting of the locally constant sheaves on $S$ whose fibers are finite projective $\Lambda$-modules.
Note that $\LocSys(\Spd k, \Lambda)$ and $\LocSys(\Spd C, \Lambda)$ is equivalent to the category of finite projective $\Lambda$-modules, and $\LocSys(\Div^1,\Lambda)$ is equivalent to the category $\Rep(W_E,\Lambda)$ of continuous representations of $W_E$ on finite projective $\Lambda$-modules.

The two geometric Satake equivalences
\[
\Sat(\Hck_{G,\Spd k},\Qlb)\simeq \Rep(\widehat{G},\Qlb)
\]
in \cite{B} and in \cite{Zhumixed} come from monoidal structures of $F_{G,\Spd k}$ via Tannakian construction, but the way of constructing the monoidal structures of $F_{G,\Spd k}$ is not the same.
\section{Plan of paper}\label{sec:plan}
For Theorem \ref{thm:main}, we need to prove the followings:
\begin{enumerate}
\item[(1)] The associativity constraints
\[
(A\star B)\star C\cong A\star (B\star C)
\]
with respect to the two monoidal structures, are the same.
\item[(2)] The unit constraints
\[
(\delta \star A)\cong A
\]
with respect to the two monoidal structures are the same, where $\delta\in \Sat(\Hck_{G,\Spd k},\Qlb)$ denotes the unit object.
\item[(3)] The commutativity constraints
\[
A\star B\cong B\star A
\]
with respect to the two monoidal structures, are the same.
\item[(4)] The isomorphisms
\[
F(A\star B)\cong F(A)\otimes F(B)
\]
with respect to the two monoidal structures of $F$, are the same.
\end{enumerate}
However, by the faithfulness of $F$, the claims (1), (2), (3) follow from the claim (4).
Thus it suffices to show (4).
In \S \ref{sec:prel}, we show some results as preliminaries.
In \S \ref{sec:twomonoidal}, we show the desired result (4).
\section{Preliminaries}\label{sec:prel}
\subsection{The functor $j_{\Wdisj,(\Spd C)^2}^*$}
Put
\[
(\Spd C)_{\Wdisj}^2:=(\Spd C)^2\setminus (\Spd C\times_{\Div^1}\Spd C).
\]
Since $\Div^1$ is the quotient of $\Spd C$ by the action of the Weil group $W_F$ of $F$ in \cite[\S IV.7]{FS}, $\Spd C\times_{\Div^1}\Spd C\cong \Spd C\times \underline{W_F}$.
The subspace $(\Spd C)_{\Wdisj}^2$ is open in $(\Spd C)^2$ since $\Div^1$ is separated by \cite[\sc{Proposition} II.1.21]{FS}.
Let
\begin{align*}
j_{\Wdisj,(\Spd C)^2}\colon (\Spd C)_{\Wdisj}^2\hookrightarrow (\Spd C)^2
\end{align*}
be the open immersion and let
\[
i_{W_F}\colon (\Spd C)\times \underline{W_F}\hookrightarrow (\Spd C)^2
\]
be the closed immersion.

By the paragraph after Remark VI.2.5 in \cite{FS}, the spaces $\Hck_{G,(\Spd C)^2}$ and $\Gr_{G,(\Spd C)^2}$ split after the base change by $j_{\Wdisj,(\Spd C)^2}$:
{\small
\begin{align*}
\Hck_{G,(\Spd C)^2}\times_{(\Spd C)^2}(\Spd C)_{\Wdisj}^2&\cong (\Hck_{G,\Spd C})^2\times_{(\Spd C)^2}(\Spd C)_{\Wdisj}^2\\
\Gr_{G,(\Spd C)^2}\times_{(\Spd C)^2}(\Spd C)_{\Wdisj}^2&\cong (\Gr_{G,\Spd C})^2\times_{(\Spd C)^2}(\Spd C)_{\Wdisj}^2
\end{align*}
}
To prove the main theorem, we reduce, in a certain sense, to the case over $(\Spd C)^2_{\Wdisj}$ by using $j^*$, where the above splitting is applied. The following lemma is necessary for this reduction:
\begin{lemm}\label{lemm:ffjneq}
Let $A\in \Shet((\Spd C)^2,\Lambda)$ be a constant sheaf of a finitely generated $\Lambda$-module, where $\Shet(-,\Lambda)$ is the full subcategory of $\Det(-,\Lambda)$ consisting of complexes concentrated in degree 0.
Then the natural map
\[
A \to R^0j_{\Wdisj,(\Spd C)^2*}j_{\Wdisj,(\Spd C)^2}^*A
\]
is an isomorphism.
\end{lemm}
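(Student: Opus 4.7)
I would check the assertion on stalks at geometric points of $(\Spd C)^2$. Outside the diagonal the open immersion $j_{\neq,(\Spd C)^2}$ is the identity and the statement is trivial, so the content is at geometric points on the diagonal $\Delta\colon \Spd C \hookrightarrow (\Spd C)^2$. Writing $i$ for this closed immersion and $j := j_{\neq,(\Spd C)^2}$, the standard distinguished triangle
\[
i_*i^!A \to A \to Rj_*j^*A \to i_*i^!A[1]
\]
together with the associated long exact sequence of cohomology sheaves reduces the claim to the vanishings $\cal{H}^0(i^!A) = \cal{H}^1(i^!A) = 0$.

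I would establish these vanishings via purity. Let $p\colon (\Spd C)^2 \to \Spd C$ be one of the two projections; then $\Delta$ is a section of $p$, and locally near points of the diagonal $p$ should be cohomologically smooth of relative dimension $1$ in the six-functor formalism on small v-stacks of \cite{Sch} (cf.\ also \cite[\S VI.1, \S VI.6]{FS}), so that $p^!\Lambda \simeq \Lambda(1)[2]$. The relation $i^!p^! = \mathrm{id}$ then yields $i^!\Lambda \simeq \Lambda(-1)[-2]$, and hence $i^!A \simeq A(-1)[-2]$ for constant $A$ with value a finitely generated $\Lambda$-module. This is concentrated in cohomological degree $2$, which gives the required vanishing.

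The main obstacle is the cohomological smoothness of $p$ near the diagonal: the absolute morphism $\Spd C \to \Spd k$ is not itself cohomologically smooth, so one has to extract smoothness of $p$ from an explicit description of $(\Spd C)^2$ around $\Delta$---for instance by exhibiting a local perfectoid model realising $(\Spd C)^2$ as an open perfectoid ``disk'' over $\Spd C$ in which $\Delta$ appears as the zero section. An alternative approach that avoids purity is to note that the stalk $(R^0j_*j^*A)_{\bar x}$ at a geometric point $\bar x \in \Delta$ is the filtered colimit over \'etale neighborhoods $U$ of $\bar x$ of $H^0(U \cap (\Spd C)^2_{\neq}, A)$, so that the assertion reduces to connectedness of small punctured \'etale neighborhoods of $\bar x$, again to be verified from an explicit local description.
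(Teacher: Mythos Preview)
Your reduction to showing $R\Delta^!\Lambda \in \Det^{\geq 2}(\Spd C,\Lambda)$ is correct and is exactly the first move in the paper's proof. The gap is everything after that. You propose to obtain $\Delta^!\Lambda \simeq \Lambda(-1)[-2]$ from $\ell$-cohomological smoothness of the projection $p\colon (\Spd C)^2 \to \Spd C$, but you yourself note that $\Spd C \to \Spd k$ (of which $p$ is a base change) is not $\ell$-cohomologically smooth, and you do not supply the promised ``explicit local perfectoid disk'' model that would salvage smoothness near the diagonal; nor is such a model a standard result one can simply cite. Your alternative via connectedness of punctured \'etale neighborhoods is likewise only stated as a desideratum. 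In short, the proposal identifies the right target but does not hit it.

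The paper's route after the common reduction is substantially different and explains why the step is nontrivial. Rather than attempting purity for $\Delta$ directly, it passes to $(\Spd F)^2$ via $q^2\colon (\Spd C)^2\to (\Spd F)^2$: since $\Spd F \to \Spd k$ \emph{is} $\ell$-cohomologically smooth, one does have $R\Delta_F^!\Lambda\in\Det^{\geq 2}$ for the diagonal $\Delta_F\colon \Spd F\hookrightarrow (\Spd F)^2$. Base change reduces the problem to $R\Delta_F^!(q^2_*\Lambda)\in\Det^{\geq 2}$, where now $q^2_*\Lambda$ is not constant but the filtered colimit $\varinjlim_{F'} q^2_{F'/F*}\Lambda$ over finite subextensions $F\subset F'\subset C$. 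The paper embeds this colimit into the corresponding inverse limit (to which the purity input applies, each $q_{F'/F}$ being finite \'etale), and then rules out degree-$0$ contributions from the quotient $Q$ by a contradiction argument that pulls back along $(\Spa F^{\rm{cycl}})^\flat\times\Spd F\to (\Spd F)^2$ and exploits connectedness of certain pro-\'etale covers. So the smoothness input enters only at the level of $F$, and bridging the pro-\'etale cover $\Spd C\to\Spd F$ is the real content of the lemma---precisely the step your plan leaves open.
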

\begin{proof}
We may assume that $\Lambda$ is $\ell$-power torsion.
Moreover, by decomposing $A$ and changing $\Lambda$ if necessary, we may assume $A=\Lambda$.
It suffices to show that $R(i_{W_F})^!\Lambda\in \Det^{\geq 2}(\Spd C,\Lambda)$, i.e. the cohomologies of the complex $R(i_{W_F})^!\Lambda$ are nonzero only on degree $\geq 2$.
Choose a pseudo-uniformizer $\varpi\in C^{\flat}$, which induces homomorphism
\[
\overline{\mathbb{F}}_p((x^{p^{-\infty}}))\to C^{\flat}, x\mapsto \varpi,
\]
and a morphism $q\colon \Spd C\to \Spd \overline{\mathbb{F}}_p((x^{p^{-\infty}}))$.
Consider the diagram
{\footnotesize
\[
\xymatrix{
\Spd C\times \underline{W_F}\ar[r]^-{i''}\ar@{=}[rd]&(1\times q)^{-1}(\Spd C\times \underline{W_F})\ar[d]_{q'}\ar[rr]^-{i'}\ar@{}[rrd]|{\square}&&\Spd C\times \Spd C\ar[d]^{1\times q}\\
&\Spd C\times \underline{W_F}\ar[r]_-{i_{W_F}}&\Spd C\times\Spd C\ar[r]_-{1\times q}&\widetilde{\mathbb{D}}^*_{C^{\flat}},
}
\]
}
where $\widetilde{\mathbb{D}}^*_{C^{\flat}}:=\Spd C\times \Spd \overline{\mathbb{F}}_p((x^{p^{-\infty}}))$.
Put $\iota:=(1\times q)\circ i_{W_F}$.
\begin{flushleft}
\textbf{Claim 1.} If $R\iota^!(1\times q)_*\Lambda\in \Det^{\geq 2}(\Spd C\times \underline{W_F})$, then the lemma follows.
\end{flushleft}
\begin{proof}
Assume $R\iota^!(1\times q)_*\Lambda\in \Det^{\geq 2}(\Spd C\times \underline{W_F})$.
Note that $R(1\times q)_{*}\Lambda=(1\times q)_{*}\Lambda \in \Shet(\widetilde{\mathbb{D}}^*_{C^{\flat}},\Lambda)$ by \cite[Remark 21.14]{Sch}.
By the base change, it follows that
\[
R(q')_*R(i')^!\Lambda \cong R\iota^!R(1\times q)_*\Lambda\in \Det^{\geq 2}.
\]
Since the functor $R(q')_*$ is exact (by \cite[Remark 21.14]{Sch}) and faithful (as $q'$ is surjective), it follows that $R(i')^!\Lambda\in \Det^{\geq 2}$, hence $R(i_{W_F})^!\Lambda= R(i'')^! R(i')^!\Lambda \in \Det^{\geq 2}$.
\end{proof}
Now it suffices to show $R\iota^!(1\times q)_*\Lambda\in \Det^{\geq 2}$.
\begin{flushleft}
\textbf{Claim 2.} $R\iota^!\Lambda \in \Det^{\geq 2}$.
\end{flushleft}
\begin{proof}
Write $\alpha$ for the projection $\widetilde{\mathbb{D}}^*_{C^{\flat}}\to \Spd C$.
Since $\alpha$ is $\ell$-cohomologically smooth and 
$R\alpha^!\Lambda$ is in degree $-2$ as in \cite[Proposition 24.1]{Sch} and the proof of \cite[Proposition 24.5]{Sch}.
Moreover, put $q_{W_F}=\alpha \circ \iota$, which is the projection $\Spd C\times \underline{W_F}\to \Spd C$.
Then $Rq_{W_F}^!\Lambda$ is a sheaf written in \cite[Proposition 24.2]{Sch}.
Thus we have $R\iota^!\Lambda\cong R\iota^!(R\alpha^!\Lambda\otimes (R\alpha^!\Lambda)^{-1})\cong Rq_{W_F}^!\Lambda\otimes (R\alpha^!\Lambda)^{-1}\in \Det^{\geq 2}$.
\end{proof}
Hence it follows that
\begin{equation}\label{eqn:!*!geq2}
\begin{split}
R\iota^!(1\times q)_{*}R(1\times q)^{!}\Lambda&\cong R(q')_*R(i')^!R(1\times q)^!\Lambda\\
&\cong R(q')_*R(q')^!R\iota^!\Lambda\\
&\in \Det^{\geq 2}.    
\end{split}
\end{equation}
However, by \textbf{Claim 1}, we have to show $R\iota^!(1\times q)_{*}(1\times q)^{*}\Lambda \in \Det^{\geq 2}$.
Therefore we need to compute the difference between $R\iota^!(1\times q)_{*}R(1\times q)^{!}\Lambda$ and $R\iota^!(1\times q)_{*}(1\times q)^{*}\Lambda$.
\begin{flushleft}
\textbf{Claim 3.} The sheaf $(1\times q)_{*}(1\times q)^{*}\Lambda$ on $\widetilde{\mathbb{D}}_{C^{\flat}}$ injects into $\mathcal{H}^0((1\times q)_{*}R(1\times q)^{!}\Lambda)$.
\end{flushleft}
\begin{proof}
For a finite extension $F'$ of $\overline{\mathbb{F}}_p((x^{p^{-\infty}}))$ in $C^{\flat}$, write $q_{F'}\colon \Spd F'\to \Spd \overline{\mathbb{F}}_p((x^{p^{-\infty}}))$ for the projection.
Then we have
\begin{align}\label{eqn:starstarlimit}
(1\times q)_{*}(1\times q)^*\cong \lim_{\substack{\longrightarrow \\ \text{$F'/\overline{\mathbb{F}}_p((x^{p^{-\infty}}))$:finite}}}(1\times q_{F'})_*(1\times q_{F'})^{*}, 
\end{align}
(see the proof of \cite[Prop 23.4]{Sch}).
Thus
\[
(1\times q)_{*}\Lambda \cong \lim_{\substack{\longrightarrow \\ \text{$F'/\overline{\mathbb{F}}_p((x^{p^{-\infty}}))$:finite}}}(1\times q_{F'})_*\Lambda.
\]
We know that $q_{F'}$ is proper and that $q$ is partially proper as $q$ is the inverse limit of $q_{F'}$.
Thus $R(1\times q_{F'})_*=R(1\times q_{F'})_!$ and $R(1\times q)_*=R(1\times q)_!$ hold.
By passing to the adjoint of (\ref{eqn:starstarlimit}), we have
\begin{align}\label{eqn:upper!limit}
(1\times q)_{*}R(1\times q)^{!}\Lambda \cong \underset{\substack{\longleftarrow \\ \text{$F'/\overline{\mathbb{F}}_p((x^{p^{-\infty}}))$:finite}}}{R\lim}(1\times q_{F'})_*R(1\times q_{F'})^!\Lambda.
\end{align}
Since $q_{F'}$ is \'{e}tale, we have $R(1\times q_{F'})^{!}\Lambda\cong (1\times q_{F'})^{*}\Lambda$.
Therefore, we obtain the natural map 
\begin{align*}
(1\times q)_{*}\Lambda &\cong \lim_{\substack{\longrightarrow \\ \text{$F'/\overline{\mathbb{F}}_p((x^{p^{-\infty}}))$:finite}}}(1\times q_{F'})_*\Lambda\\
&\to \lim_{\substack{\longleftarrow \\ \text{$F'/\overline{\mathbb{F}}_p((x^{p^{-\infty}}))$:finite}}}(1\times q_{F'})_*\Lambda\\
&\cong \lim_{\substack{\longleftarrow \\ \text{$F'/\overline{\mathbb{F}}_p((x^{p^{-\infty}}))$:finite}}}(1\times q_{F'})_*R(1\times q_{F'})^!\Lambda \\
&\cong \mathcal{H}^0((1\times q)_{*}R(1\times q)^{!}\Lambda),
\end{align*}
which is injective since it is a sheafification of an injection of presheaves from a direct limit as presheaves to an inverse limit.
\end{proof}
Put
\[
Q:=\mathcal{H}^0((1\times q)_{*}R(1\times q)^!\Lambda)/(1\times q)_{*}(1\times q)^{*}\Lambda.
\]
By the long exact sequence, \textbf{Claim 1} and (\ref{eqn:!*!geq2}), we have to show that
\[
R\iota^!Q\in \Det^{\geq 1}.
\]
It suffices to show that $\Hom_{\Det(\Spd C\times \underline{W_F},\Lambda)}(A,R\iota^!Q)=0$ for any $A\in \Shet(\Spd C\times \underline{W_F},\Lambda)$, equivalently,
\[
\Hom_{\Shet(\widetilde{\mathbb{D}}^*_{C^{\flat}},\Lambda)}(\iota_*A,Q)=0.
\]
Here we use the equality $\iota_*=R\iota_*=R\iota_!$ since $1\times q$ is partially proper and $R(1\times q)_*$ is exact by \cite[Remark 21.14]{Sch}.

Since $\Spd C\times \underline{W_F}$ is a locally spatial diamond, \cite[Proposition 14.15]{Sch} says that $\Det(\Spd C\times \underline{W_F},\Lambda)$ is the left-completion of $D((\Spd C\times \underline{W_F})_{\et},\Lambda)$, the derived category of $\Lambda$-sheaves on \'{e}tale site over $\Spd C\times \underline{W_F}$.
The same holds for $\widetilde{\mathbb{D}}^*_{C^{\flat}}$.
Also, $\iota_*=\iota_{\et*}$ holds, where $\iota_{\et*}$ is the pushforward functor for sheaves on \'{e}tale sites.

Thus it suffices to show the following claim:
\begin{flushleft}
\textbf{Claim 4.}
\[
\Hom_{\Sh((\widetilde{\mathbb{D}}^*_{C^{\flat}})_{\et},\Lambda)}(\iota_*A,Q)=0
\]
for any $A\in \Sh((\Spd C\times \underline{W_F})_{\et},\Lambda)$.
\end{flushleft}
\begin{proof}
Assume 
\[
f\in \Hom_{\Sh((\widetilde{\mathbb{D}}^*_{C^{\flat}})_{\et},\Lambda)}(\iota_*A,Q)
\]
is nonzero, that is, there exists a geometric point $x\colon \Spd C'\to \widetilde{\mathbb{D}}^*_{C^{\flat}}$ such that the stalk $f_x$ is nonzero.
There exist an \'{e}tale map $U\to \widetilde{\mathbb{D}}^*_{C^{\flat}}$ and $a\in (\iota_{*}A)(U)$ such that the point $\Spd C'\overset{x}{\to} \widetilde{\mathbb{D}}^*_{C^{\flat}}$ factors through $U$ and $f(a)_x\neq 0$.
By shrinking $U$, there exists 
\[
\widetilde{f(a)}\in (\lim_{\substack{\longleftarrow \\ F'}}(1\times q_{F'})_*\Lambda)(U)\overset{(\ref{eqn:upper!limit})}{\cong}  \mathcal{H}^0((1\times q)_{*}R(1\times q)^!\Lambda)(U)
\]
such that its image in $Q(U)$ is $f(a)(U)$.
For each finite extension $F'$ over $\overline{\mathbb{F}}_p((t^{p^{-\infty}}))$ in $C^{\flat}$, consider the diagram
\[
\xymatrix{
U_C\ar[r]\ar[d]\ar@{}[rd]|{\square}&U_{F'}\ar[r]\ar[d]\ar@{}[rd]|{\square}&U\ar[d]\\
(\Spd C)^2\ar[r]&\Spd C\times \Spd F'\ar[r]_-{1\times q_{F'}}&\widetilde{\mathbb{D}}^*_{C^{\flat}}.
}
\]
Let $V$ be a connected component of $U$ whose image in $\widetilde{\mathbb{D}}^*_{C^{\flat}}$ contains the image of $x$.
Since $U_{F'}\to U$ is finite \'{e}tale surjective, there exists a connected component $V_{F'}$ of $U_{F'}$ which restricts to a surjection $V_{F'}\twoheadrightarrow V$.
We can choose $\{V_{F'}\}_{F'}$ functorially in $F'$.
Now put
\[
V_C:=\lim_{\substack{\longleftarrow \\ F'}}V_{F'}.
\]
The space $V_C$ is connected and the map $V_C\to V$ is surjective by construction.
Let $\widetilde{x}$ be a geometric point of $V_C$ whose image in $\widetilde{\mathbb{D}}^*_{C^{\flat}}$ is $x$.
Consider the element
\[
(1\times q)^*\widetilde{f(a)}\in ((1\times q)^*(\lim_{\substack{\longleftarrow \\ F'}}(1\times q_{F'})_*\Lambda))(U_C).
\]
The map $\Spd C\to \Spd \overline{\mathbb{F}}_p((t^{p^{-\infty}}))$ is universally open since it is the quotient map by an action of the absolute Galois group of $\overline{\mathbb{F}}_p((t^{p^{-\infty}}))$.
Therefore the map $1\times q$ is universally open.
From this we can show that the morphism of sheaves
\[
(1\times q)^*(\lim_{\substack{\longleftarrow \\ F'}}(1\times q_{F'})_*\Lambda)\to \lim_{\substack{\longleftarrow \\ F'}}(1\times q)^*(1\times q_{F'})_*\Lambda
\]
is an isomorphism since the section of both sheaves on an \'{e}tale map $U'\to (\Spd C)^2$ is equal to $\displaystyle\lim_{\substack{\longleftarrow \\ F'}}\Lambda((1\times q_{F'})^{-1}(1\times q)(U))$.

As $(f(a))_x$ is nonzero, we have
\begin{align*}
((1\times q)^*\widetilde{f(a)})_{\widetilde{x}}&\notin (1\times q)^*(\lim_{\substack{\longrightarrow \\ F'}}(1\times q_{F'})_*\Lambda)_{\widetilde{x}}\\
&=(\lim_{\substack{\longrightarrow \\ F'}}(1\times q)^*(1\times q_{F'})_*\Lambda)_{\widetilde{x}}.
\end{align*}
Since the \'{e}tale morphism $1\times q_{F'}$ splits after a base change by $1\times q$, the sheaf $(1\times q)^*(1\times q_{F'})_*\Lambda$ is a constant sheaf of a finitely generated $\Lambda$-module.
Thus the sheaf $\displaystyle\lim_{\substack{\longleftarrow \\ F'}}(1\times q)^*(1\times q_{F'})_*\Lambda$ is a sheaf of the form $W'\mapsto C(|W'|,K)$ for some profinite space $K$.
Since $V_C$ is connected but $K$ is totally disconnected, it follows that 
\[
((1\times q)^*\widetilde{f(a)})_{y}\notin (\lim_{\substack{\longrightarrow \\ F'}}(1\times q)^*(1\times q_{F'})_*\Lambda)_{y}
\]
for any $y\in V_C$.
Hence $(f(a))_y\neq 0$ for any $y\in V_C$.
However, let $U_C'$ be the fiber product as in
\[
\xymatrix{
U_C'\ar[r]\ar[d]\ar@{}[rd]|{\square}&U_C\ar[d]\\
\Spd C\times \underline{W_F}\ar[r]_-{\iota}&\widetilde{\mathbb{D}}^*_{C^{\flat}}.
}
\]
Then $U_C'$ is pro\'{e}tale over $\Spd C\times \underline{W_F}$, in particular totally disconnected, but $V_C$ is connected and not a single point (as $V_C\to V$ is surjective).
Therefore we have a point $y\in V_C\setminus \mathrm{Im}(U_C')$.
Since the image of $y$ is away from $\Spd C\times \underline{W_F}$, it follows that
\[
f_y\in \Hom((\iota_{*}A)_y,Q_y)=0,
\]
which is a contradiction.
\end{proof}
\end{proof}
Let $\mathrm{Sh}_{\et}^{\mathrm{consta}}((\Spd C)^2,\Lambda)$ be the full subcategory of $\mathrm{Sh}_{\et}((\Spd C)^2,\Lambda)$ consisting of constant sheaves of finitely generated $\Lambda$-modules.
By the standard argument, Lemma \ref{lemm:ffjneq} implies the following corollary:
\begin{cor}\label{cor:ffjWdisj}
The pullback functor
\[
j_{\Wdisj}^*\colon \mathrm{Sh}_{\et}^{\mathrm{consta}}((\Spd C)^2,\Lambda)\to \Det((\Spd C)^2_{\Wdisj},\Lambda)
\]
is fully faithful.
\end{cor}
Let $\Det^{\mathrm{consta}}((\Spd C)^n,\Lambda)\subset \Det((\Spd C)^n,\Lambda)$ denote the full subcategory consisting of constant complexes with perfect fibers, i.e. the full subcategory of objects isomorphic to the pullback of a perfect complex in $\Det(*,\Lambda)\simeq D(\Lambda\mathrm{\text{-mod}})$.
This full subcategory satisfies the following lemma:
\begin{lemm}\label{lemm:Dconsta}
The full subcategory $\Det^{\mathrm{consta}}((\Spd C)^n,\Lambda)\subset \Det((\Spd C)^n,\Lambda)$ is a Serre subcategory.
Moreover, the functor 
\[
H^0((\Spd C)^n,-)
\]
is exact on $\Det^{\mathrm{consta}}((\Spd C)^n,\Lambda)$.
\end{lemm}
\begin{proof}
Both statements reduce to the case $n=0$ by using the fact that the pullback functor
\[
\Det(*,\Lambda)\to \Det((\Spd C)^n,\Lambda)
\]
is fully faithful, see \cite[Theorem 1.13(ii)]{Sch}.
\end{proof}
\begin{cor}\label{cor:ffjneq}
For $A\in \Det^{\mathrm{consta}}((\Spd C)^2,\Lambda)$, the homomorphism
\[
j_{\Wdisj,(\Spd C)^2}^*\colon H^*((\Spd C)^2,A)\to H^*((\Spd C)^2_{\Wdisj} ,j_{\Wdisj,(\Spd C)^2}^*A)
\]
is injective.
\end{cor}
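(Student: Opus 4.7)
The plan is to deduce Corollary~\ref{cor:ffjneq} from Lemma~\ref{lemm:ffjneq} via the open--closed localization triangle for the diagonal $\Delta\colon \Spd C \hookrightarrow (\Spd C)^2$, which is the closed complement of $j_{\neq,(\Spd C)^2}$.

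Concretely, one has the triangle
\[
\Delta_* R\Delta^! A \to A \to R j_{\neq,(\Spd C)^2 *}\, j_{\neq,(\Spd C)^2}^* A \to +
\]
in $\Det((\Spd C)^2,\Lambda)$. Applying $R\Gamma((\Spd C)^2,-)$ and using the identification $R\Gamma((\Spd C)^2,\Delta_*(-)) \cong R\Gamma(\Spd C,-)$ produces a long exact sequence, from which injectivity of $j_{\neq,(\Spd C)^2}^*$ in every cohomological degree is equivalent to showing that the canonical map
\[
H^n(\Spd C, R\Delta^! A) \to H^n((\Spd C)^2, A)
\]
vanishes for all $n$.

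To control $R\Delta^! A$ for ULA $A$, I would use a projection/duality isomorphism of the form $R\Delta^! A \cong \Delta^* A \otimes^L R\Delta^! \Lambda$ furnished by the ULA hypothesis. Combined with the bound $R\Delta^! \Lambda \in \Det^{\geq 2}(\Spd C,\Lambda)$ established in the proof of Lemma~\ref{lemm:ffjneq}, this gives a strong connectivity bound on $R\Delta^! A$ in terms of the amplitude of $\Delta^* A$. Filtering $A$ by its cohomology sheaves (or truncating) and using that ULA sheaves are, up to shift, locally built from constant sheaves, reduces the general case to $A$ concentrated in a single degree and (locally) constant, where Lemma~\ref{lemm:ffjneq} applies directly.

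The main obstacle I anticipate is passing from the connectivity bound on $R\Delta^! A$ to actual vanishing of the comparison map in \emph{all} cohomological degrees. In the low degrees dictated by the connectivity bound the source $H^n(\Spd C, R\Delta^! A)$ vanishes outright; but in higher degrees the argument needs further input. I would try to supply this via a Künneth-type decomposition exploiting the cohomological triviality of $\Spd C$, to argue that classes in $H^n(\Spd C, R\Delta^! A)$ cannot have nonzero image inside $H^n((\Spd C)^2, A)$.
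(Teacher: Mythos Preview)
Your outline contains the right idea, but you bury it and then talk yourself out of it. The passage through the localization triangle and the projection formula $R\Delta^! A \cong \Delta^* A \otimes^L R\Delta^!\Lambda$ is unnecessary, and as you yourself observe, the connectivity bound $R\Delta^!\Lambda \in \Det^{\geq 2}$ only controls the lowest degrees and does not by itself force the comparison map to vanish everywhere.

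The paper's proof is precisely the ``filter by cohomology sheaves'' remark you make in passing. The point you are missing is that no further input is needed once you use it properly. Since $A$ is ULA over $(\Spd C)^2$, each $\mathcal{H}^m(A)$ is a constant sheaf of a finitely generated $\Lambda$-module, and constant sheaves on $(\Spd C)^2$ have no higher cohomology. Hence the hypercohomology spectral sequence on the source degenerates and gives
\[
H^m((\Spd C)^2,A)\;\cong\;H^0((\Spd C)^2,\mathcal{H}^m(A)).
\]
Now Lemma~\ref{lemm:ffjneq} identifies this with $H^0((\Spd C)^2, R^0 j_{\neq *}\, j_{\neq}^*\,\mathcal{H}^m(A))$, and the edge map of the hypercohomology spectral sequence on the target
\[
H^0\bigl((\Spd C)^2,\, R^0 j_{\neq *}\, j_{\neq}^*\,\mathcal{H}^m(A)\bigr)\;\longrightarrow\; H^m\bigl((\Spd C)^2_{\neq},\, j_{\neq}^* A\bigr)
\]
is injective because there are no incoming differentials into the $p=0$ column. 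Composing these two maps gives exactly the restriction $j_{\neq}^*$ on $H^m$, so it is injective.

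In short: the obstacle you anticipate does not exist. The degeneration on the source side (coming from cohomological triviality of $(\Spd C)^2$ for constant coefficients) means there is nothing to check in ``higher degrees'' beyond what Lemma~\ref{lemm:ffjneq} already provides. Drop the $R\Delta^!$ machinery and write the two-line spectral sequence argument.
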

\begin{proof}
It suffices to show the injectivity on degree $0$.
Then we may assume that $A$ is concentrated in degree $\leq 0$.
We have the following homomorphism of exact sequences:
{\footnotesize
\[
\xymatrix@!C=90pt{
H^0((\Spd C)^2, \tau_{\leq -1}A)\ar[r]\ar[d]^{j^*}&H^0((\Spd C)^2, A)\ar[r]\ar[d]^{j^*}&H^0((\Spd C)^2, \mathcal{H}^0(A))\ar[d]^{j^*}\\
H^0((\Spd C)^2_{\Wdisj}, j^*\tau_{\leq -1}A)\ar[r]&H^0((\Spd C)^2_{\Wdisj}, j^*A)\ar[r]&H^0((\Spd C)^2_{\Wdisj}, j^*\mathcal{H}^0(A)),
}
\]
}
where $j=j_{\Wdisj,(\Spd C)^2}$.
Lemma \ref{lemm:Dconsta} implies $H^0((\Spd C)^2, \tau_{\leq -1}A)=0$, and Lemma \ref{lemm:ffjneq} implies the right vertical arrow is an isomorphism.
This implies the desired injectivity.
\end{proof}
We write $D_{\mathrm{lc}}(-,\Lambda)\subset \Det(-,\Lambda)$ for the full subcategory of locally constant complexes with perfect fibers.
We also need the following lemma later:
\begin{lemm}\label{lemm:Divconsta}
 If $A\in \Det((\Spd C)^n,\Lambda)$ is in the essential image of the pullback functor
 \[
 D_{\mathrm{lc}}((\Div^1)^n,\Lambda)\to \Det((\Spd C)^n,\Lambda),
 \]
 then $A\in \Det^{\mathrm{consta.}((\Spd C)^n,\Lambda)}$.
 \end{lemm}
 \begin{proof}
 The claim follows from a commutative diagram of pullback functors
 \[
 \xymatrix{
 D_{\mathrm{lc}}([*/W_E]^n)\ar[r]\ar[d]& D_{\mathrm{lc}}((\Div^1)^n)\ar[d]\\
 D_{\mathrm{lc}}(*)\ar[r]& D_{\mathrm{lc}}((\Spd C)^n), 
 }
 \]
 and the fact that the upper horizontal arrow is an equivalence by \cite[\sc{Proposition} IV.7.3]{FS}.
 \end{proof}
\subsection{The v-stack $\Hck^{(i)}$ and the fusion product}
In this subsection, we define the following v-stack $\Hck^{(i)}$ and v-sheaf $\Gr^{(i)}$ ($i=1,2$) over $(\Spd C)^2$:
\begin{defi}
\begin{enumerate}
\item For $i=1,2$, the v-stack $\Hck^{(i)}_{G,(\Div^1_{\mathcal{Y}})^2}$ over $(\Div^1_{\mathcal{Y}})^2$ is a functor sending an affinoid perfectoid $S\to (\Div^1_{\mathcal{Y}})^2$ to the groupoid of pairs of $G$-bundles $\mathcal{E}_1,\mathcal{E}_2$ over $B^+_{\Div^2_{\mathcal{Y}}}(S)$ (with respect to the composition $S\to (\Div^1_{\mathcal{Y}})^2\to \Div^2_{\mathcal{Y}}$) together with an isomorphism $\phi$ between $\mathcal{E}_1$ and $\mathcal{E}_2$ over $B^+_{\Div^2_{\mathcal{Y}}}(S)[1/\mathcal{I}_i]$.
Here $\mathcal{I}_i$ is an ideal sheaf corresponding to a map
\begin{align}
S\to (\Div^1_{\mathcal{Y}})^2\overset{\mathrm{pr}_i}{\to} \Div^1_{\mathcal{Y}},
\label{map:ithdiv}
\end{align}
where $\mathrm{pr}_i$ is the $i$-th projection.
\item For $i=1,2$, the v-stack $\Gr^{(i)}_{G,(\Div^1_{\mathcal{Y}})^2}$ over $(\Div^1_{\mathcal{Y}})^2$ is a functor sending an affinoid perfectoid $S\to (\Div^1_{\mathcal{Y}})^2$ to the sets of a $G$-bundle $\mathcal{E}_1$ over $B^+_{\Div^2_{\mathcal{Y}}}(S)$ together with a trivialization $\phi$ of $\mathcal{E}_1$ over $B^+_{\Div^2_{\mathcal{Y}}}(S)[1/\mathcal{I}_i]$.
\end{enumerate}
\end{defi}
We can write $\Hck^{(i)}$ and $\Gr^{(i)}$ in terms of a variant of loop groups:
\begin{defi}
For $i=1,2$, the v-sheaf $L^{(i)}_{(\Div^1_{\mathcal{Y}})^2}G$ over $(\Div^1_{\mathcal{Y}})^2$ is a v-sheaf given by
\[
S\mapsto G(B_{\Div^2_{\mathcal{Y}}}^+(S)[1/\mathcal{I}_i]).
\]
\end{defi}
\begin{prop}
\begin{enumerate}
\item There is a natural isomorphism of \'{e}tale stacks over $(\Div^1_{\mathcal{Y}})^2$
\[
\Hck^{(i)}_{G,(\Div^1_{\mathcal{Y}})^2}\cong (L^+_{(\Div^1_{\mathcal{Y}})^2}G)\backslash (L^{(i)}_{(\Div^1_{\mathcal{Y}})^2}G)/(L^+_{(\Div^1_{\mathcal{Y}})^2}G).
\]
\item There is a natural isomorphism of \'{e}tale sheaves over $(\Div^1_{\mathcal{Y}})^2$
\[
\Gr^{(i)}_{G,(\Div^1_{\mathcal{Y}})^2}\cong (L^{(i)}_{(\Div^1_{\mathcal{Y}})^2}G)/(L^+_{(\Div^1_{\mathcal{Y}})^2}G).
\]
\end{enumerate}
\end{prop}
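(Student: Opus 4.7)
The plan is to mimic the standard identification of the Beilinson--Drinfeld affine Grassmannian and local Hecke stack with quotients of loop groups, as carried out in the one-divisor setting in \cite{FS}. The only new feature is that the base is a product of two divisors and one must keep track of which of the two ideal sheaves $\cal{I}_i$ is being inverted. The core analytic inputs are (a) that $G$-bundles on $B^+_{\Div^2_{\cal{Y}}}(S)$ are \'{e}tale-locally trivial on $S$, and (b) a Beauville--Laszlo descent statement for $G$-bundles along the $i$-th divisor.

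I would handle (ii) first. For an affinoid perfectoid $S \to (\Div^1_{\cal{Y}})^2$ and an $S$-point $(\cal{E}_1, \phi)$ of $\Gr^{(i)}_{G,(\Div^1_{\cal{Y}})^2}$, pass to an \'{e}tale cover on which $\cal{E}_1$ admits a trivialization $\psi$; then $\psi \circ \phi^{-1}$ defines an element of $G(B^+_{\Div^2_{\cal{Y}}}(S)[1/\cal{I}_i]) = L^{(i)}_{(\Div^1_{\cal{Y}})^2}G(S)$, well-defined up to left multiplication by $L^+_{(\Div^1_{\cal{Y}})^2}G(S)$ coming from the choice of $\psi$. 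Conversely, Beauville--Laszlo gluing along the $i$-th divisor sends $g \in L^{(i)}_{(\Div^1_{\cal{Y}})^2}G(S)$ to a $G$-bundle on $B^+_{\Div^2_{\cal{Y}}}(S)$ equipped with a tautological trivialization over $B^+_{\Div^2_{\cal{Y}}}(S)[1/\cal{I}_i]$. These operations are inverse to each other modulo the $L^+G$-ambiguity, so after \'{e}tale sheafification one obtains the claimed isomorphism.

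For (i) I would apply the argument of (ii) to both $\cal{E}_1$ and $\cal{E}_2$. After an \'{e}tale cover of $S$ one chooses trivializations $\psi_1$ and $\psi_2$ of the two bundles; then $\psi_2 \circ \phi \circ \psi_1^{-1}$ lies in $L^{(i)}_{(\Div^1_{\cal{Y}})^2}G(S)$, well-defined up to left and right multiplication by $L^+_{(\Div^1_{\cal{Y}})^2}G(S)$ coming from the two independent choices. \'{E}tale sheafification then yields the double coset description, and the functoriality in $S$ is built into the assignment.

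The main obstacle is establishing (a) and (b) in the two-divisor setting. Both statements are available for a single divisor in \cite{FS}. I would reduce to that case by viewing the $i$-th projection $S \to (\Div^1_{\cal{Y}})^2 \to \Div^1_{\cal{Y}}$ as the map for which the single-divisor Beauville--Laszlo applies, treating the remaining factor as a family parameter; the relative construction of $B^+_{\Div^2_{\cal{Y}}}(S)$ over $B^+_{\Div^1_{\cal{Y}}}(S)$ with respect to the other factor should make this reduction immediate. Once (a) and (b) are in hand, the remaining checks that the resulting maps are functorial and respect the \'{e}tale stack/sheaf structures are routine.
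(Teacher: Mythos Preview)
Your proposal is correct and is essentially what the paper does: the paper simply says the result follows by the same argument as \cite[Proposition VI.1.7, VI.1.9]{FS}, and you have unpacked exactly that argument in the two-divisor setting.
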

\begin{proof}
This follows from the same argument as \cite[\sc{Proposition} VI.1.7, VI.1.9]{FS}.
\end{proof}
For $i=1,2$, let $B^+_i(S)$ be a ring $B^+_{\Div^1_{\mathcal{Y}}}(S)$ with respect to a natural map 
(\ref{map:ithdiv}), that is, the completion of $\mathcal{O}_{\mathcal{Y}_S}$ along $\mathcal{I}_i$.
Then there is a natural map
\begin{align}\label{eqn:defofpi}
p_i\colon \Hck^{(i)}_{G,(\Div^1_{\mathcal{Y}})^2}\to \Hck_{G,\Div^1_{\mathcal{Y}}}
\end{align}
sending $(\mathcal{E}_1,\mathcal{E}_2, \phi)$ to its pullback $(\mathcal{E}_1|_{B^+_i(S)},\mathcal{E}_2|_{B^+_i(S)}, \phi|_{B^+_i(S)[1/\mathcal{I}_i]})$ under a natural map 
\[
B^+_i(S)\to B^+_{\Div^2_{\mathcal{Y}}}(S),
\]
noting that the right hand side is the completion of $\mathcal{O}_{\mathcal{Y}_S}$ along $\mathcal{I}_i$ and the left is along $\mathcal{I}_1\mathcal{I}_2$.
Also, there are two projections
\begin{equation}\label{eqn:defofpii2}
\begin{split}
\pi_{1,2},\pi_{2,2}\colon \Hck^{(i)}_{G,(\Div^1_{\mathcal{Y}})^2}&\cong (L^+_{(\Div^1_{\mathcal{Y}})^2}G)\backslash (L^{(i)}_{(\Div^1_{\mathcal{Y}})^2}G)/(L^+_{(\Div^1_{\mathcal{Y}})^2}G)\\
&\to (\Div^1_{\mathcal{Y}})^2/(L^+_{(\Div^1_{\mathcal{Y}})^2}G)
\end{split}
\end{equation}
with respect to the left and right actions of $L^+_{(\Div^1_{\mathcal{Y}})^2}G$ on $L^{(i)}_{(\Div^1_{\mathcal{Y}})^2}G$, respectively.

For a v-stack $S\to (\Div^1_{\mathcal{Y}})^2$, put
\[
\Hck^{(i)}_{G,S}:=\Hck^{(i)}_{G,(\Div^1_{\mathcal{Y}})^2}\times_{(\Div^1_{\mathcal{Y}})^2}S,
\]
and still write $\pi_{1,2},\pi_{2,2}$ for the base change of $\pi_{1,2},\pi_{2,2}$ to $S$.
We also write $p_i$ for the map
\begin{align*}
p_i\times \mathrm{pr}_i\colon \Hck^{(i)}_{G,(\Spd C)^2}&=\Hck^{(i)}_{G,(\Div^1_{\mathcal{Y}})^2}\times_{(\Div^1_{\mathcal{Y}})^2}(\Spd C)^2\\
&\to \Hck_{G,\Div^1_{\mathcal{Y}}}\times_{\Div^1_{\mathcal{Y}}}\Spd C=\Hck_{G,\Spd C}.    
\end{align*}
Put
\begin{align*}
\Hck_{G,(\Spd C)^2}^{\text{conv}}&:=L^+_{(\Spd C)^2}G\backslash L^{(1)}_{(\Spd C)^2}G\times^{L^+_{(\Spd C)^2}G}L^{(2)}_{(\Spd C)^2}G/L^+_{(\Spd C)^2}G,\\
&=\Hck^{(1)}_{G,(\Spd C)^2}\times_{\pi_{2,2},(\Spd C)^2/L^+G,\pi_{1,2}}\Hck^{(2)}_{G,(\Spd C)^2}.
\end{align*}
There exists a canonical map
\begin{align}\label{eqn:defofconv2}
\mathrm{conv}_2\colon \Hck_{G,(\Spd C)^2}^{\text{conv}}&\to \Hck_{G,(\Spd C)^2}
\end{align}
induced by the multiplication map 
\[
L^{(1)}_{(\Spd C)^2}G\times_{(\Spd C)^2}L^{(2)}_{(\Spd C)^2}G\to L_{(\Spd C)^2}G.
\]
Moreover, there are natural maps
\begin{equation}\label{eqn:defofa'2}
\begin{split}
a'_2\colon \Hck_{G,(\Spd C)^2}^{\text{conv}}&=\Hck^{(1)}_{G,(\Spd C)^2}\times_{\pi_{2,2},(\Spd C)^2/L^+G,\pi_{1,2}}\Hck^{(2)}_{G,(\Spd C)^2}\\
&\to \Hck^{(1)}_{G,(\Spd C)^2}\times_{(\Spd C)^2}\Hck^{(2)}_{G,(\Spd C)^2}.
\end{split}
\end{equation}
For $A,B\in \Sat(\Hck_{G,\Spd C},\Qlb)$, define the fusion product 
\[
A\ast B\in \Sat(\Hck_{G,(\Spd C)^2},\Qlb)
\]
by
\[
A\ast B:= R(\mathrm{conv}_2)_* a_2^{\prime,*}(p_1^*A\boxtimes p_2^*B).
\]
We will use the following lemma later:
\begin{lemm}\label{lemm:FAastBconst}
Let $A,B\in \Sat(\Hck_{G,\Spd C},\Qlb)$.
The sheaf $F^2_{\Spd C}(A\ast B)\in \mathrm{LocSys}((\Spd C)^2,\Lambda)$ (defined in (\ref{eqn:defofF})) is constant.
\end{lemm}
\begin{proof}
The category $\Sat(\Hck_{G,\Spd C},\Qlb)$ is semisimple and its simple objects are the intersection sheaves, which admit a Weil descent.
Thus $A$ and $B$ can be written as the pullbacks of sheaves in $\Sat(\Hck_{G,\Div^1},\Qlb)$.
It holds that
\[
F^2(A\ast B)=\bigoplus_{i}R^i(\pi_{G,\Spd C})_*\pi_{\Gr_{\Spd C}}^*R(\mathrm{conv}_2)_*(a_2^{\prime}\circ (p_1\times p_2))^*(A\boxtimes B),
\]
where $\pi_{\Gr_{\Spd C}}\colon \Gr_{G,\Spd C}\to \Hck_{G,\Spd C}$ is the natural projection.
The morphisms appearing here can also be defined over $\Div^1$, and $\pi_{G,\Div^1}$ and $\mathrm{conv}_{2,\Div^1}$ is ind-proper.
This implies that $F^2(A\ast B)$ can be written as the pullback of an object in  $\mathrm{LocSys}((\Div^1)^2,\Lambda)$.
Then the claim follows from Lemma \ref{lemm:Divconsta}.
\end{proof}
\section{The two monoidal structures}\label{sec:twomonoidal}
In this section, we will prove the main theorem, which states that two isomorphisms
\[
F(A\star B)\cong F(A)\otimes F(B)
\]
agree for $A,B\in \Sat(\Hck_{G,\Spd k},\Qlb)$.

We start with some preliminaries and proceed to review the first monoidal structure: Zhu's monoidal structure.
Next, we explain the second monoidal structure, which needs some arguments.
Finally, we prove the main theorem.
\subsection{Preliminaries}
Let 
\begin{align*}
i_{\Spd \mathcal{O}_C/L^+G}\colon \Spd k/L^+_{\Spd k}G\to \Spd \mathcal{O}_C/L^+_{\Spd \mathcal{O}_C}G,\\
j_{\Spd \mathcal{O}_C/L^+G}\colon \Spd C/L^+_{\Spd C}G\to \Spd \mathcal{O}_C/L^+_{\Spd \mathcal{O}_C}G
\end{align*}
be the natural immersions.
We also write 
\begin{align*}
i_{\Hck}\colon \Hck_{G,\Spd k}\to \Hck_{G,\Spd \mathcal{O}_C},\\
j_{\Hck}\colon \Hck_{G,\Spd C}\to \Hck_{G,\Spd \mathcal{O}_C}
\end{align*}
for the natural immersions.
For a small v-stack $S\to \Div^1_{(\mathcal{Y})}$, the two maps
\[
\pi_{1,1,S},\pi_{2,1,S}\colon \Hck_{G,S}\cong [L^+_SG\backslash L_SG/L^+_SG]\to [S/L^+_SG],
\]
with respect to the left and right actions, induce an $H^*([S/L^+_SG],\Lambda)$-bialgebra structure on $H^*(\Hck_{G,S},\Lambda)$.
Thus for $A\in \Det^{\ULA}(\Hck_{G,S},\Lambda)$, the module $H^*(\Hck_{G,S},A)$ is an $H^*([S/L^+_SG],\Lambda)$-bimodule.
\begin{lemm}\label{lemm:kC}
\begin{enumerate}
\item The pullback homomorphisms
\begin{align*}
H^*(\Spd \mathcal{O}_C/L^+_{\Spd \mathcal{O}_C}G,\Lambda)\to H^*(\Spd k/L^+_{\Spd k}G,\Lambda),\\
H^*(\Spd \mathcal{O}_C/L^+_{\Spd \mathcal{O}_C}G,\Lambda)\to H^*(\Spd C/L^+_{\Spd C}G,\Lambda)
\end{align*}
induced by $i_{\Spd \mathcal{O}_C/L^+G}$ and $j_{\Spd \mathcal{O}_C/L^+G}$, respectively, are isomorphisms of rings.
\item Put $R_{G}:=H^*(\Spd k/L^+_{\Spd k}G,\Lambda)$.
For $A\in \Det^{\ULA}(\Hck_{G,\Spd \mathcal{O}_C},\Lambda)$ and $S=\Spd C$, $\Spd k$ or $\Spd \mathcal{O}_C$, 
the cohomology group 
\[
H^*(\Hck_{G,S},A)
\]
is a bimodule over $H^*([S/L^+_SG],S)\cong R_G$ by (i).
Then the pullback homomorphisms
\begin{align*}
H^*(\Hck_{G,\Spd \mathcal{O}_C},A)\to H^*(\Hck_{G,\Spd k},i_{\Hck}^*A),\\
H^*(\Hck_{G,\Spd \mathcal{O}_C},A)\to H^*(\Hck_{G,\Spd C},j_{\Hck}^*A)
\end{align*}
are isomorphisms of $R_G$-bimodules.
\end{enumerate}
\end{lemm}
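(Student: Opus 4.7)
The plan is to reduce part (i) to a K\"unneth-type computation on the cohomology of the classifying stack of the reductive group $G$, and then to deduce part (ii) from part (i) by ULA base change.

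For part (i), I would first reduce from $L^+G$-equivariant cohomology to $G$-equivariant cohomology. By the same argument as in the proof of Lemma \ref{lemm:ffjneqequiv}, applying \cite[Proposition VI.4.1]{FS} to the exhaustive filtration of $L^{+,\geq 1}G$ by subgroups with affine-space graded pieces, the pullback functors
\[
\Det(\Spd X/G,\Lambda)\overset{\sim}{\to}\Det(\Spd X/L^+_{\Spd X}G,\Lambda)
\]
for $X\in\{\cal{O}_C,k,C\}$ are symmetric monoidal equivalences, compatible with $i^*_{\Spd \cal{O}_C/L^+G}$ and $j^*_{\Spd \cal{O}_C/L^+G}$. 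Thus it suffices to establish the analogue of (i) for $G$-equivariant cohomology. Since $G$ descends to $\cal{O}_F$, I would write $\Spd X/G\cong \Spd X\times_{\Spd \cal{O}_F}BG_{\Spd \cal{O}_F}$ and apply K\"unneth to get $H^*(\Spd X/G,\Lambda)\cong H^*(\Spd X,\Lambda)\otimes_\Lambda H^*(BG_{\Spd \cal{O}_F},\Lambda)$. The three base diamonds $\Spd \cal{O}_C,\Spd k,\Spd C$ all have cohomology $\Lambda$ concentrated in degree $0$ (since $\cal{O}_C$ is strictly henselian with residue field $k$ and fraction field $C$), and the restriction maps between them induce the identity on $\Lambda$. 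Consequently the pullback maps on $H^*(\Spd X/G,\Lambda)$ become the identity on $H^*(BG_{\Spd \cal{O}_F},\Lambda)$, hence isomorphisms of rings; the ring structure is automatically preserved because pullback is symmetric monoidal.

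For part (ii), I would combine part (i) with ULA base change. Let $\pi\colon \Hck_{G,\Spd \cal{O}_C}\to \Spd \cal{O}_C/L^+_{\Spd \cal{O}_C}G$ be the map induced by one of the two $L^+G$-quotients in $\Hck_{G,S}=L^+G\backslash LG/L^+G$. For $A\in\Det^{\ULA}(\Hck_{G,\Spd \cal{O}_C},\Lambda)$, the ULA base change yields isomorphisms $i^*_{\Spd \cal{O}_C/L^+G}R\pi_*A\cong R\pi_{k,*}i^*_{\Hck}A$ and similarly for $j$. Taking $R\Gamma$ and invoking part (i) via a spectral sequence argument on the cohomology sheaves of $R\pi_*A$, the desired isomorphisms
\[
H^*(\Hck_{G,\Spd \cal{O}_C},A)\cong H^*(\Hck_{G,\Spd k},i^*_{\Hck}A),\qquad H^*(\Hck_{G,\Spd \cal{O}_C},A)\cong H^*(\Hck_{G,\Spd C},j^*_{\Hck}A)
\]
follow. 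The $R_G$-bimodule structure on each side arises from cup product with pullbacks along the two natural projections $\Hck_{G,S}\to S/L^+_S G$, and the pullbacks $i^*_{\Hck},j^*_{\Hck}$ are by construction compatible with these projections, so the isomorphisms above preserve the bimodule structures.

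The hardest step will be the K\"unneth argument in part (i): it relies on the cohomological triviality $H^*(\Spd \cal{O}_C,\Lambda)=\Lambda$ and on a genuine K\"unneth formula for the fibered product $\Spd X\times_{\Spd \cal{O}_F}BG_{\Spd \cal{O}_F}$ in the v-topology. The former should follow from the strict henselianity of $\cal{O}_C$ combined with standard v-cohomology properties of valuation rings, while the latter should follow from the $\ell$-cohomological smoothness of $BG_{\Spd \cal{O}_F}\to \Spd \cal{O}_F$ together with the base change machinery of \cite{Sch}.
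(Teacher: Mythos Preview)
The paper's proof is quite different from yours and considerably shorter. It invokes \cite[Corollary~VI.6.7]{FS} directly: restricted to the $0$-Schubert cell, that result gives equivalences
\[
\Det^{\ULA}(\Spd k/L^+G)\xleftarrow{\ i^*\ }\Det^{\ULA}(\Spd\cal{O}_C/L^+G)\xrightarrow{\ j^*\ }\Det^{\ULA}(\Spd C/L^+G),
\]
and since the constant sheaf $\Lambda$ is ULA, the identification $H^m=\Hom(\Lambda,\Lambda[m])$ is preserved by these equivalences, giving~(i). Part~(ii) is handled identically, using the $\Hck$-level equivalences from the same corollary and writing $H^m(\Hck,A)=\Hom(\Lambda_{\mathrm{supp}(A)},A[m])$, where $\Lambda_{\mathrm{supp}(A)}$ is ULA by \cite[Proposition~VI.6.5]{FS}. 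No K\"unneth and no spectral sequence appear.

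Your argument, by contrast, has two gaps. In part~(i) the K\"unneth formula $H^*(\Spd X/G)\cong H^*(\Spd X)\otimes_\Lambda H^*(BG_{\Spd\cal{O}_F})$ is not correct as written: the fibre product is over $\Spd\cal{O}_F$, whose residue field $\bb{F}_q$ is not algebraically closed, so $H^*(BG_{\Spd\cal{O}_F},\Lambda)$ mixes arithmetic (Frobenius) and equivariant contributions and is not the object you want on the right; a K\"unneth over such a base would at best involve $\otimes_{H^*(\Spd\cal{O}_F)}$ rather than $\otimes_\Lambda$. Replacing $\cal{O}_F$ by $\cal{O}_{\breve F}$ repairs the arithmetic issue, but you would still owe a K\"unneth theorem for classifying v-stacks of reductive groups, and neither $\ell$-cohomological smoothness nor the base-change formalism of \cite{Sch} delivers this off the shelf.

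In part~(ii), your spectral sequence has $E_2$-terms $H^p(\Spd\cal{O}_C/L^+G,\cal{H}^q(R\pi_*A))$, so comparing with the analogue over $\Spd k$ requires part~(i) for the ULA sheaves $\cal{H}^q(R\pi_*A)$, not merely for the constant sheaf $\Lambda$. You have not established that extension, and supplying it (by showing every ULA sheaf on $\Spd\cal{O}_C/L^+G$ is a sum of shifts of $\Lambda$) is essentially tantamount to proving the categorical equivalence the paper simply quotes, at which point the detour through K\"unneth and spectral sequences has bought nothing.
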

\begin{proof}
\begin{enumerate}
\item
There are equivalences of categories
\newlength{\len}
\settowidth{\len}{$\Det^{\ULA}(\Spd k/L^+_{\Spd k}G)$}
\[
\xymatrix@C=50pt@R=3pt{
\Det^{\ULA}(\Spd k/L^+_{\Spd k}G)\ar@{<-}[r]^-{i_{\Spd \mathcal{O}_C/L^+G}^*}&\Det^{\ULA}(\Spd \mathcal{O}_C/L^+_{\Spd \mathcal{O}_C}G)\\
\hspace{\len}\ar[r]^-{j_{\Spd \mathcal{O}_C/L^+G}^*}&\Det^{\ULA}(\Spd C/L^+_{\Spd C}G),
}
\]
which can be obtained by restricting the result of \cite[\sc{Corollary} VI.6.7]{FS} to the $0$-Schubert cell $\Hck_{G,S,0}$.
Since the constant sheaf $\Lambda$ is ULA, we have
\begin{align*}
&H^m(\Spd \mathcal{O}_C/L^+_{\Spd \mathcal{O}_C}G,\Lambda)\\
&=\Hom_{\Det(\Spd \mathcal{O}_C/L^+_{\Spd \mathcal{O}_C}G)}(\Lambda,\Lambda[m])\\
&\cong \Hom_{\Det(\Spd k/L^+_{\Spd k}G)}(i_{\Spd \mathcal{O}_C/L^+G}^*\Lambda,i_{\Spd \mathcal{O}_C/L^+G}^*\Lambda[m])\\
&=H^m(\Spd k/L^+_{\Spd k}G,\Lambda).
\end{align*}
This homomorphism is equal to the natural map in the statement, and it is a homomorphism of rings by the general theory.
The proof of the second isomorphism is the same.
\item
When we regard $H^*(\Hck_{G,\Spd k},i_{\Hck}^*A)$ as a $H^*(\Hck_{G,\Spd \mathcal{O}_C},\Lambda)$-module via the pullback homomorphism 
\[
H^*(\Hck_{G,\Spd \mathcal{O}_C},\Lambda)\to H^*(\Hck_{G,\Spd k},\Lambda),
\]
the first homomorphism in the statement is a homomorphism of $H^*(\Hck_{G,\Spd C},\Lambda)$-modules.
Thus, when we consider both sides of this homomorphism as a bimodule over 
\[
R_G\overset{\text{(i)}}{\cong} H^*([\Spd \mathcal{O}_C/L^*_{\Spd \mathcal{O}_C}G],\Lambda)
\]
via the two maps $\pi_{1,1,\Spd \mathcal{O}_C}$ and $\pi_{2,1,\Spd \mathcal{O}_C}$, this homomorphism is a homomorphism of $R_G$-bimodules.
This $R_G$-bimodule structure on the target agrees with the bimodule structure in the statement since the diagram
\[
\xymatrix@C=40pt{
\Hck_{G,\Spd \mathcal{O}_C}\ar[r]^-{\pi_{i,1,\Spd \mathcal{O}_C}}&\Spd \mathcal{O}_C/L^+_{\Spd \mathcal{O}_C}G\\
\Hck_{G,\Spd k}\ar[u]\ar[r]^-{\pi_{i,1,\Spd k}}&\Spd k/L^+_{\Spd k}G\ar[u]
}
\]
commutes for $i=1,2$.
Therefore, the first homomorphism in the statement is a homomorphism of $R_G$-bimodules.

Now we can show the claim by the same argument as (1), using equivalences of categories
\begin{align*}
\Det^{\ULA}(\Hck_{G,\Spd k})&\xleftarrow{i_{\Hck}^*}\Det^{\ULA}(\Hck_{G,\Spd \mathcal{O}_C})\\
&\xrightarrow{j_{\Hck}^*}\Det^{\ULA}(\Hck_{G,\Spd C})    
\end{align*}
in \cite[\sc{Corollary} VI.6.7]{FS} and noting that $\Lambda_{\mathrm{supp}(A)}$ is ULA by \cite[\sc{Proposition} VI.6.5]{FS}.
\end{enumerate}
\end{proof}
\subsection{The first monoidal structure: Zhu's monoidal structure}\label{ssc:Zhu}
In \cite[\S 2.3]{Zhumixed}, they construct the monoidal structure as follows: (here, by `monoidal structure' we only mean an isomorphism $F(A\star B)\cong F(A)\otimes F(B)$ in view of \S \ref{sec:plan}.)

Note that we use the identification of several schemes or stacks over $\Spec k$ with corresponding diamonds or v-stacks over $\Spd k$ via \cite[\S 27]{Sch}.
\begin{enumerate}
\item For $A,B\in \Perv(\Hck_{G,\Spd k},\Qlb)$, the natural map
\begin{align*}
&R\Gamma(\Hck_{G,\Spd k},A)\otimes_{\Qlb}R\Gamma(\Hck_{G,\Spd k},B)\\
&\overset{-\boxtimes -}{\longrightarrow} R\Gamma(\Hck_{G,\Spd k}\times_{\Spd k}\Hck_{G,\Spd k},A\boxtimes B)\\
&\overset{(a'_1)^*}{\longrightarrow} R\Gamma(\Hck_{G,\Spd k}^{\mathrm{conv}},(a'_1)^*(A\boxtimes B))\\
&=R\Gamma(\Hck_{G,\Spd k},A\star B)
\end{align*}
induces an isomorphism
\begin{align}\label{eqn:Zhuequivmonoidal}
H^*(\Hck_{G,\Spd k},A)\otimes_{R_G}H^*(\Hck_{G,\Spd k},B)\cong H^*(\Hck_{G,\Spd k},A\star B)
\end{align}
of $R_G$-bimodules. 
\item \label{item:twoaction} Two actions of $R_G$ on $H^*(\Hck_{G,\Spd k},A)$ coincide for any perverse sheaf $A\in \Perv(\Hck_{G,\Spd k},\Qlb)$.
\item By tensoring $\Qlb$ over $R_G$ with the isomorphism (\ref{eqn:Zhuequivmonoidal}) and using (\ref{item:twoaction}) the fact that 
\[
\Qlb\otimes_{R_G} H^*(\Hck_{G,\Spd k},A)\cong H^*(\Gr_{G,\Spd k},A)
\]
holds, we have the natural isomorphism
\begin{align*}
H^*(\Gr_{G,\Spd k},A)\otimes H^*(\Gr_{G,\Spd k},B)\cong H^*(\Gr_{G,\Spd k},A\star B).
\end{align*}
\end{enumerate}
Using Lemma \ref{lemm:kC}, the same argument as above works if we replace $\Spd k$ by $\Spd C$.
\subsection{Construction of the second monoidal strucutre}\label{ssc:FS}
In this subsection, we explain the construction of the second monoidal structure of 
\[
F^1_{\Spd k}\colon \Sat(\Hck_{G,\Spd k},\Qlb)\to \mathrm{LocSys}(\Spd k,\Qlb)=\mathrm{Vect}_{\Qlb}.
\]
In \cite{B}, we implicitly use the result of this subsection.
For the construction, we first construct a monoidal structure of
\[
F^1_{\Spd C}\colon \Sat(\Hck_{G,\Spd C},\Qlb)\to \mathrm{LocSys}(\Spd C,\Qlb)=\mathrm{Vect}_{\Qlb},
\]
by an argument similar to \cite[\sc{Definition/Proposition} VI.9.4.]{FS}.
More precisely, it is as follows:
Recall that in \cite{FS}, they construct a monoidal structure of 
\begin{align}\label{eqn:FSmonoidal}
F^1_{\Div^1}\colon \Sat(\Hck_{G,\Div^1},\Lambda)\to \mathrm{LocSys}(\Div^1,\Lambda)=\Rep(W_F,\Lambda)
\end{align}
for a torsion $\Lambda$.
The same argument works for $\Lambda=\Qlb$, so we obtain a monoidal structure of 
\begin{align*}\label{eqn:FSmonoidalQlb}
F^1_{\Div^1}\colon \Sat(\Hck_{G,\Div^1},\Qlb)\to \mathrm{LocSys}(\Div^1,\Lambda)=\Rep(W_F,\Qlb).
\end{align*}
In this subsection, we will show the following proposition:
\begin{prop}\label{prop:F1monoidal}
There exists a symmetric monoidal structure 
\[
F^1_{\Spd C}\colon \Sat(\Hck_{G,\Spd C},\Qlb)\to \mathrm{LocSys}(\Spd C,\Lambda)
\]
such that the natural isomorphism which makes the diagram
\begin{align}\label{eqn:diag:F1Div1SpdC}
\vcenter{
\xymatrix{
\Sat(\Hck_{G,\Div^1},\Qlb)\ar[r]^{F^1_{\Div^1}}\ar[d]_{(-)^*}&\mathrm{LocSys}(\Div^1,\Lambda)\ar[d]^{(-)^*}\\
\Sat(\Hck_{G,\Spd C},\Qlb)\ar[r]^{F^1_{\Spd C}}&\mathrm{LocSys}(\Spd C,\Spd C)
}
}
\end{align}
commute is symmetric monoidal with respect to the above symmetric monoidal structure of $F^1_{\Div^1}$ and the canonical symmetric monoidal structures of the pullback functors.
\end{prop}
Once we prove this proposition, we obtain a monoidal structure of $F^1_{\Spd k}$ by connecting $\Spd C$ with $\Spd k$ via \cite[\sc{Corollary} VI.6.7]{FS} (or \cite{B}.)
\begin{proof}
The proposition follows since we can give a parallel construction to Fargue--Scholze's construction of (\ref{eqn:FSmonoidal}), noting that in the part where they used \cite[Proposition VI.9.3]{FS}, we need to use Corollary \ref{cor:ffjneq} (and Lemma \ref{lemm:FAastBconst}) instead.

Namely, it is as follows:
One can easily show that there are cartesian squares
\[
\xymatrix{
\Hck_{G,\Spd C}\ar@{<-}[r]^-{\mathrm{conv}_1}\ar[d]_{\Delta_{\Hck}}&\Hck^{\mathrm{conv}}_{G,\Spd C}\ar[r]^-{a'_1}\ar[d]^{\Delta_{\Hck^{\mathrm{conv}}}}&\Hck_{G,\Spd C}\times_{\Spd C}\Hck_{G,\Spd C}\ar[d]^{\Delta_{\Hck^2}}&\\
\Hck_{G,(\Spd C)^2}\ar@{<-}[r]^-{\mathrm{conv}_2}&\Hck^{\mathrm{conv}}_{G,(\Spd C)^2}\ar[r]^-{a'_2}&\Hck^{(1)}_{G,(\Spd C)^2}\times_{(\Spd C)^2}\Hck^{(2)}_{G,(\Spd C)^2},
}
\]
where the maps $\mathrm{conv}_1$ and $a_1'$ are defined in (\ref{eqn:defofconv1}) and (\ref{eqn:defofa'1}), respectively, and the vertical maps are the base changes of the diagonal map $\Spd C\to (\Spd C)^2$, under the isomorphism
\[
\Hck_{G,\Spd C}^{\text{conv}}\cong \Hck_{G,(\Spd C)^2}^{\text{conv}}\times_{(\Spd C)^2,\Delta}\Spd C, 
\]
and similar isomorphisms.
Thus, the restriction of $A\ast B$ to the diagonal is isomorphic to the convolution $A\star B$.
On the other hand, the restriction of $A\ast B$ to 
\[
\Hck_{G,(\Spd C)^2_{\Wdisj}}\cong (\Hck_{G,\Spd C})^2\times_{(\Spd C)^2}(\Spd C)^2_{\Wdisj}
\]
is isomorphic to the restriction of $A\boxtimes B$.
Also, the K\"{u}nneth formula defines an isomorphism
\begin{align}\label{eqn:FSKunneth}
F^2_{(\Spd C)^2_{\Wdisj}}((A\boxtimes B)|_{\Hck_{G,(\Spd C)^2_{\Wdisj}}})\cong (F^1(A)\boxtimes F^1(B))|_{(\Spd C)^2_{\Wdisj}}.
\end{align}
Therefore
\[
F^2_{(\Spd C)^2}(A\ast B)|_{\Hck_{G,(\Spd C)^2_{\Wdisj}}}\cong (F^1(A)\boxtimes F^1(B))|_{(\Spd C)^2_{\Wdisj}}.
\]
All objects in $\Det(\Spd C,\Lambda)$ are constant (since $\Spd C$ is strictly totally disconnected and $|\Spd C|=*$).
This implies that $F^1(A)$ and $F^1(B)$ are constant, so $F^1(A)\boxtimes F^1(B)$ are constant as well.
Moreover, $F^2_{(\Spd C)^2}(A\ast B)$ is constant by Lemma \ref{lemm:FAastBconst}.
By Corollary \ref{cor:ffjneq}, we have an isomorphism
\begin{align}\label{eqn:FSF2F1F1}
F^2_{(\Spd C)^2}(A\ast B)\cong F^1_{\Spd C}(A)\boxtimes F^1_{\Spd C}(B)
\end{align}
and 
\[
H^*(\Gr_G,A\star B)\cong H^*(\Gr_G,A)\otimes H^*(\Gr_G,B)
\]
by restricting to the diagonal.
The other conditions for being symmetric monoidal can be shown in exactly the same way, parallel to the argument in 
\cite{FS}.
The condition concerning the square (\ref{eqn:diag:F1Div1SpdC}) follows easily since this construction is completely parallel to Fargues--Scholze's one.
\end{proof}
\subsection{Proof of main theorem}
Now we can show the main theorem:
\begin{thm}\label{thm:main:re}
For $A, B\in \Sat(\Hck_{G,\Spd k},\Qlb)$, the isomorphism
\[
F(A\star B)\cong F(A)\otimes F(B)
\]
in \S \ref{ssc:FS} coincides with one in \S \ref{ssc:Zhu}.
\end{thm}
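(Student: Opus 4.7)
The plan is to realize both monoidal isomorphisms $F(A) \otimes F(B) \overset{\sim}{\to} F(A\star B)$ as elements of a single cohomology group over a common stack defined in \S4.2, and then to exploit the injectivity given in Lemma \ref{lemm:ffjneqequiv}(2) to reduce the comparison to the non-diagonal locus $(\Spd C)^2_{\neq}$, where both structures must reduce to the canonical K\"unneth isomorphism.

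First, by Lemma \ref{lemm:kC} the cohomology groups appearing in the definition of $F$ over $\Spd k$, $\Spd \cal{O}_C$, and $\Spd C$ are canonically identified. This allows me to transport the isomorphism from \cite{Zhumixed} (originally built on the Witt-vector side) and the isomorphism from \cite{B} (originally obtained via nearby cycle from the $\Spd C$-side) into a common cohomology group over $\Spd \cal{O}_C$, and hence over $\Spd C$. Using the family description provided by the v-stacks $\Hck^{(i)}_{G,(\Spd C)^2}$ together with the projections $p_i$ and $\pi_{1,2},\pi_{2,2}$ of \S4.2, I will further upgrade each of these two maps to an isomorphism of sheaves on $(\Spd C)^2/L^+_{(\Spd C)^2}G$. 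The crucial point is that both monoidal structures are obtained by specializing a natural convolution/fusion construction on $\Hck^{(i)}$ (Zhu's via the Witt-vector Beilinson--Drinfeld Grassmannian, \cite{B}'s via the Fargues--Fontaine side), so both admit a realization in this common cohomology group.

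Next, I invoke Lemma \ref{lemm:ffjneqequiv}(2), which asserts that the restriction map
\[
j_{\neq,(\Spd C)^2/L^+G}^{*}\colon H^{*}\bigl((\Spd C)^2/L^+G,-\bigr)\to H^{*}\bigl((\Spd C)^2_{\neq}/L^+G,-\bigr)
\]
is injective on ULA sheaves over $(\Spd C)^2$. This reduces the required equality to an equality after restriction to $(\Spd C)^2_{\neq}$. Over the non-diagonal locus the two Cartier divisors defining $\Hck^{(i)}$ are disjoint, so the local Hecke stack factors as (an open substack of) the external product $\Hck_{G,\Spd C}\times \Hck_{G,\Spd C}$. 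On cohomology the convolution/fusion isomorphism in either construction then reduces, via K\"unneth, to the canonical exterior-product isomorphism $R\pi_{G,\Spd C *}A \otimes R\pi_{G,\Spd C *}B \cong R\pi_{*}(A\boxtimes B)$. Since this last isomorphism does not depend on the choice of monoidal structure, the two agree on $(\Spd C)^2_{\neq}$, and hence, by injectivity, on $(\Spd C)^2$.

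The main obstacle will be the first step: carefully showing that Zhu's monoidal structure in \cite{Zhumixed}, originally built using the fusion product on the Witt-vector Beilinson--Drinfeld Grassmannian together with a numerical argument on the affine Hecke algebra, genuinely pulls back from the very same sheaf on $\Hck^{(i)}_{G,(\Spd C)^2}$ that governs the construction in \cite{B}. Concretely, one has to match the degeneration of the Witt-vector family along the diagonal with the nearby-cycle specialization of the analytic family; once this identification is in place, the injectivity in Lemma \ref{lemm:ffjneqequiv}(2) and the K\"unneth computation over $(\Spd C)^2_{\neq}$ formally conclude the proof.
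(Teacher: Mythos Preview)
Your overall strategy---spread both monoidal isomorphisms over $(\Spd C)^2$, invoke injectivity of restriction to $(\Spd C)^2_{\neq}$, and identify both with the K\"unneth map there---is exactly the paper's approach. However, your description of Zhu's construction is off, and this makes the ``main obstacle'' you isolate a phantom. Zhu does \emph{not} build the monoidal structure of $F$ via a Witt-vector Beilinson--Drinfeld Grassmannian or any fusion product: no such degeneration exists in the Witt-vector setting (this absence is precisely why the Fargues--Scholze approach is new). Zhu's monoidal isomorphism for $F$ comes instead from the equivariant-cohomology identity \cite[Lemma~2.18]{Zhumixed},
\[
H^*(\Hck_{G,\Spd C},A)\otimes_{R_G}H^*(\Hck_{G,\Spd C},B)\;\cong\;H^*(\Hck_{G,\Spd C},A\star B),
\]
followed by $\Qlb\otimes_{R_G}(-)$. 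So there is no ``degeneration of the Witt-vector family along the diagonal'' to be matched with anything.

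What the paper actually does, and what you should do, is this: for cohomology classes $x\in H^i(\Hck_{G,\Spd C},A)$ and $y\in H^j(\Hck_{G,\Spd C},B)$, construct an explicit element $x\ast y$ in the cohomology of $R(\pi_{1,2})_*(A\ast B)$ over $(\Spd C)^2/L^+G$ whose restriction to the diagonal recovers the Zhu-side image of $1\otimes x\otimes y$, and whose restriction to $(\Spd C)^2_{\neq}$ agrees with the restriction of the exterior-product class $\ol{x}\boxtimes\ol{y}$. Then Corollary~\ref{cor:ffjneq} (the non-equivariant injectivity, rather than Lemma~\ref{lemm:ffjneqequiv}) forces $\Delta^*(\ol{x\ast y})=\Delta^*(\ol{x}\boxtimes\ol{y})=\ol{x}\otimes\ol{y}$, giving the explicit formula $1\otimes x\otimes y\mapsto \ol{x}\otimes\ol{y}$ for the composite. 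The genuine work is the element-by-element construction of $x\ast y$ and the verification of its diagonal and off-diagonal behavior, not any matching of families.
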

As in \S \ref{ssc:FS}, the second isomorphism comes from a monoidal structure on
\[
F^1_{\Spd C}\colon \Sat(\Hck_{G,\Spd C},\Qlb)\to \mathrm{Vect}_{\Qlb},
\]
by connecting $\Spd C$ with $\Spd k$ via \cite[\sc{Corollary} VI.6.7]{FS}.
On the other hand, as written in the last paragraph in \S \ref{ssc:Zhu}, there is a method of constructing a monoidal structure of $F^1_{\Spd C}$, which is parallel to Zhu's construction.
This construction is compatible with Zhu's one via \cite[\sc{Corollary} VI.6.7]{FS} in view of Lemma \ref{lemm:kC}.
Therefore, it suffices to show that the two monoidal structures of $F^1_{\Spd C}$ coincide:
\begin{thm}
The above two isomorphisms
\[
F^1_{\Spd C}(A\star B)\cong F^1_{\Spd C}(A)\otimes F^1_{\Spd C}(B)
\]
coincide.
\end{thm}
\begin{proof}
We have the following commutative diagram in which all the squares are cartesian:
\begin{align}
\vcenter{
\xymatrix{
\Hck_{G,\Spd C}\times_{\Spd C} \Hck_{G,\Spd C}
\ar[r]^-{\Delta'_{\Hck^2}}
\ar@{=}[d]
&
\Hck_{G,\Spd C}\times\Hck_{G,\Spd C}
\ar@{<-}[d]^{p_1\times p_2}
\\
\Hck_{G,\Spd C}\times_{\Spd C} \Hck_{G,\Spd C}
\ar[r]^-{\Delta_{\Hck^2}}
\ar@{<-}[d]_{a_1'}
&
\Hck_{G,(\Spd C)^2}^{(1)}\times_{(\Spd C)^2}\Hck_{G,(\Spd C)^2}^{(2)}
\ar@{<-}[d]_{a_2'}
\\
\Hck^{\mathrm{conv}}_{G,\Spd C}
\ar[r]^-{\Delta_{\Hck^{\mathrm{conv}}}}
\ar[d]_{\mathrm{conv}_1}
&
\Hck^{\mathrm{conv}}_{G,(\Spd C)^2}
\ar[d]^{\mathrm{conv}_2}
\\
\Hck_{G,\Spd C}
\ar[r]^-{\Delta_{\Hck}}
\ar@{<-}[d]_{\pi_{\Gr_{\Spd C}}}
&
\Hck_{G,(\Spd C)^2}
\ar@{<-}[d]_{\pi_{\Gr_{(\Spd C)^2}}}
\\
\Gr_{G,\Spd C}
\ar[r]^-{\Delta_{\Gr}}
&
\Gr_{G,(\Spd C)^2}.
}
}\label{eqn:diagmainDelta}
\end{align}
Here the horizontal arrows are the pullback of the diagonal map 
\[
\Delta_{(\Spd C)^2}\colon \Spd C\to (\Spd C)^2
\]
and $\pi_{\Gr_{\Spd C}}, \pi_{\Gr_{(\Spd C)^2}}$ are the projections.
See (\ref{eqn:defofpi}), (\ref{eqn:defofa'1}), (\ref{eqn:defofa'2}) for the definition of $p_i$, $a_1'$, $a_2'$, respectively.
We also have the following commutative diagram in which all the squares are cartesian:
{\small
\begin{align}
\vcenter{
\xymatrix{
\Hck_{G,\Spd C}\times\Hck_{G,\Spd C}
\ar@{<-}[r]^{j'_{\Hck^2}}
\ar@{<-}[d]^{p_1\times p_2}
&
(\Hck_{G,\Spd C})^2\times_{(\Spd C)^2}(\Spd C)^2_{\Wdisj}
\ar@{=}[d]
\\
\Hck_{G,(\Spd C)^2}^{(1)}\times_{(\Spd C)^2}\Hck_{G,(\Spd C)^2}^{(2)}
\ar@{<-}[r]^{j_{\Hck^2}}
\ar@{<-}[d]_{a_2'}
&
(\Hck_{G,\Spd C})^2\times_{(\Spd C)^2}(\Spd C)^2_{\Wdisj}
\ar@{=}[d]
\\
\Hck^{\mathrm{conv}}_{G,(\Spd C)^2}
\ar@{<-}[r]^-{j_{\Hck^{\mathrm{conv}}}}
\ar[d]^{\mathrm{conv}_2}
&
(\Hck_{G,\Spd C})^2\times_{(\Spd C)^2}(\Spd C)^2_{\Wdisj}
\ar@{=}[d]
\\
\Hck_{G,(\Spd C)^2}
\ar@{<-}[r]^-{j_{\Hck}}
\ar@{<-}[d]_{\pi_{\Gr_{(\Spd C)^2}}}
&
(\Hck_{G,\Spd C})^2\times_{(\Spd C)^2}(\Spd C)^2_{\Wdisj}
\ar@{<-}[d]^{\pi_{\Gr_{(\Spd C)^2_{\Wdisj}}}}
\\
\Gr_{G,(\Spd C)^2}
\ar@{<-}[r]^-{j_{\Gr}}
&
(\Gr_{G,\Spd C})^2\times_{(\Spd C)^2}(\Spd C)^2_{\Wdisj}.
}
}\label{eqn:diagmainWdisj}
\end{align}
}
Here the horizontal arrows are the pullback of the immersion 
\[
j_{\Wdisj}\colon (\Spd C)^2_{\Wdisj}\to (\Spd C)^2.
\]

As reviewed in \S \ref{ssc:Zhu}, the first monoidal structure is defined as follows:
The canonical map
\begin{equation}\label{eqn:xstary}
\begin{split}
&R\Gamma(\Hck_{G,\Spd C},A)\otimes_{\Qlb}R\Gamma(\Hck_{G,\Spd C},B)\\
&\overset{-\boxtimes-}{\longrightarrow} R\Gamma(\Hck_{G,\Spd C}\times_{\Spd C}\Hck_{G,\Spd C},A\boxtimes B)\\
&\overset{(a'_1)^*}{\longrightarrow} R\Gamma(\Hck_{G,\Spd C}^{\mathrm{conv}},(a'_1)^*(A\boxtimes B))\\
&\cong R\Gamma(\Hck_{G,\Spd C},A\star B)
\end{split}
\end{equation}
induces an isomorphism
\[
H^*(\Hck_{G,\Spd C},A)\otimes_{R_G}H^*(\Hck_{G,\Spd C},B)\cong H^*(\Hck_{G,\Spd C},A\star B).
\]
By applying $\Qlb\otimes_{R_G}-$, we have
\begin{multline}\label{fml:Zhu}
\Qlb\otimes_{R_G}H^*(\Hck_{G,\Spd C},A)\otimes_{R_G}H^*(\Hck_{G,\Spd C},B)\\\cong H^*(\Gr_{G,\Spd C},A\star B).
\end{multline}
On the other hand, we have the second monoidal structure:
\begin{align}\label{fml:FS}
H^*(\Gr_{G,\Spd C},A)\otimes_{\Qlb}H^*(\Gr_{G,\Spd C},B)\cong H^*(\Gr_{G,\Spd C},A\star B), 
\end{align}
as reviewed in \S \ref{ssc:FS}.
It suffices to show that the composition
\begin{equation}\label{fml:compo}
\begin{split}
&\Qlb\otimes_{R_G}H^*(\Hck_{G,\Spd C},A)\otimes_{R_G}H^*(\Hck_{G,\Spd C},B)\\
&\overset{(\ref{fml:Zhu})}{\cong} H^*(\Gr_{G,\Spd C},A\star B)\\
&\overset{(\ref{fml:FS})}{\cong} H^*(\Gr_{G,\Spd C},A)\otimes_{\Qlb}H^*(\Gr_{G,\Spd C},B)
\end{split}
\end{equation}
is a natural map under the fact that the two $R_G$-actions on $H^*(\Hck_{G,\Spd C},A)$ coincide.
That is, when we write $\overline{x}$ for the image of $x\in H^*(\Hck_{G,(\Spd C)^n},F)$ under the natural map $H^*(\Hck_{G,(\Spd C)^n},F)\to H^*(\Gr_{G,(\Spd C)^n},F)$, it is enough to show that the composition (\ref{fml:compo}) is the map of the form
\[
1\otimes x\otimes y\mapsto \overline{x}\otimes \overline{y}.
\]
First, for $x\in H^i(\Hck_{G,\Spd C},A)$ and $y\in H^j(\Hck_{G,\Spd C},B)$, we can define an element $x\ast y$ of
\begin{align*}
R^{i+j}\Gamma(\Hck_{G,(\Spd C)^2},A\ast B), 
\end{align*}
as the image of $x\otimes y$ under a map
\begin{align*}
&R\Gamma(\Hck_{G,\Spd C},A)\otimes_{\Qlb} R\Gamma(\Hck_{G,\Spd C},B)\\
&\overset{-\boxtimes -}{\longrightarrow} R\Gamma(\Hck_{G,\Spd C}\times \Hck_{G,\Spd C}, A\boxtimes B)\\
&\overset{(a'_2)^*(p_1\times p_2)^*}{\longrightarrow} R\Gamma(\Hck^{\mathrm{conv}}_{G,(\Spd C)^2},(a'_2)^*(p_1^*A\boxtimes p_2^*B))\\
&\cong R\Gamma(\Hck_{G,(\Spd C)^2},R(\mathrm{conv}_{2})_*(a'_2)^*(p_1^*A\boxtimes p_2^*B))\\
&=R\Gamma(\Hck_{G,(\Spd C)^2},A\ast B).
\end{align*}
\textbf{Claim 1.} Under the isomorphism (\ref{fml:Zhu}), $\Delta_{\Gr}^*(\overline{x\ast y})$ corresponds to 
\[
1\otimes x\otimes y\in \Qlb\otimes_{R_G}H^*(\Hck_{G,\Spd C},A)\otimes_{R_G}H^*(\Hck_{G,\Spd C},B).
\]
\begin{proof}
It suffices to show that the homomorphism (\ref{eqn:xstary}) maps $x\otimes y$ to $\Delta_{\Hck}^*(x\ast y)$.
This can be proved by a standard argument using the cartesian squares in (\ref{eqn:diagmainDelta}) and applying the proper base change for the ind-proper morphism $\mathrm{conv}_2$, noting that all concerning sheaves have bounded supports. 
\end{proof}
On the other hand, we can define an element $\overline{x}\boxtimes \overline{y}$ of 
\[
H^{i+j}((\Gr_{G,\Spd C})^2,A\boxtimes B),
\]
as the image of $x\otimes y$ under a map
\begin{align*}
&H^i(\Hck_{G,\Spd C},A)\otimes_{\Qlb}H^j(\Hck_{G,\Spd C},B)\\
&\overset{\mathrm{pullback}}{\longrightarrow} H^i(\Gr_{G,\Spd C},A)\otimes_{\Qlb}H^j(\Gr_{G,\Spd C},B)\\
&\overset{-\boxtimes -}{\longrightarrow} H^{i+j}((\Gr_{G,\Spd C})^2,A\boxtimes B).
\end{align*}
Let $j_{\Gr}'\colon (\Gr_{\Spd C})^2\times_{(\Spd C)^2}(\Spd C)^2_{\Wdisj}\to (\Gr_{\Spd C})^2$ be the immersion.\\
\textbf{Claim 2.} The restrictions $(j_{\Gr})^*(\overline{x\ast y})$ and $(j'_{\Gr})^*(\overline{x}\boxtimes \overline{y})$ of $\overline{x\ast y}$ and $\overline{x}\boxtimes \overline{y}$ to $(\Spd C)^2_{\Wdisj}$ coincide as an element of the $(i+j)$-th cohomology of 
\begin{align*}
&R\Gamma((\Gr_{\Spd C})^2\times_{(\Spd C)^2}(\Spd C)^2_{\Wdisj}, (j'_{\Gr})^*(A\boxtimes B)).
\end{align*}
\begin{proof}
This can be proved by the standard argument using the diagram (\ref{eqn:diagmainWdisj}) and base changes.
\end{proof}
By Lemma \ref{lemm:FAastBconst} and Lemma \ref{lemm:Dconsta}, we have
\begin{align*}
H^{i+j}(\Gr_{G,(\Spd C)^2}, A\ast B)
&\cong H^0((\Spd C)^2,R^{i+j}(\pi_{G,(\Spd C)^2})_*(A\ast B))\\
&\subset H^0((\Spd C)^2,F^2(A\ast B)).
\end{align*}
Thus the element $\overline{x\ast y}$ can be seen as an element of $H^0((\Spd C)^2,F^2(A\ast B))$.
Moreover, we obtain a commutative diagram
{\footnotesize
\[
\xymatrix@!C=150pt{
H^{i+j}(\Gr_{G,(\Spd C)^2}, A\ast B)
\ar[r]^{\sim}
\ar[d]_{j^*}
&H^0((\Spd C)^2,R^{i+j}\pi_*(A\ast B))
\ar[d]^{j^*}
\\
H^{i+j}(\Gr_{G,(\Spd C)^2_{\Wdisj}}, j^*(A\ast B))
\ar[r]
\ar[d]_{\sim}
&H^0((\Spd C)^2_{\Wdisj},j^*R^{i+j}\pi_*(A\ast B))
\ar[d]^{\sim}
\\
H^{i+j}(\Gr_{G,(\Spd C)^2_{\Wdisj}}, j^*(A\boxtimes B))
\ar[r]
\ar@{<-}[d]_{j^*}
&H^0((\Spd C)^2_{\Wdisj},j^*\displaystyle\bigoplus_{i'+j'=i+j}R^{i'}\pi_*A\boxtimes R^{j'}\pi_*B))
\ar@{<-}[d]^{j^*}
\\
H^{i+j}((\Gr_{G,\Spd C})^2, A\boxtimes B)\ar[r]
\ar@{<-}[d]_{-\boxtimes -}
&H^0((\Spd C)^2,\displaystyle\bigoplus_{i'+j'=i+j}R^{i'}\pi_*A\boxtimes R^{j'}\pi_*B))
\ar@{<-}[d]^{-\boxtimes -}
\\
H^{i}(\Gr_{G,\Spd C},A)\otimes H^j(\Gr_{G,\Spd C},B)\ar[r]^{\sim}&H^0(\Spd C,R^{i}\pi_*A)\otimes H^0(\Spd C,R^{i}\pi_*B).
}
\]
}
Here we only write $j$ for various base changes of $(\Spd C)^2_{\Wdisj}\to (\Spd C)^2$, and $\pi$ for the projections $\Gr_{G,\Spd C}\to \Spd C$ and $\Gr_{G,(\Spd C)^2}\to (\Spd C)^2$.
The five objects on the right are by definition direct summands of 
\begin{align*}
&H^0((\Spd C)^2, F^2(A\ast B)), H^0((\Spd C)^2, j^*F^2(A\ast B)), \\
&H^0((\Spd C)^2, j^*(F^1(A)\boxtimes F^1(B))),
H^0((\Spd C)^2, F^1(A)\boxtimes F^1(B))),\\
&\text{and }H^0(\Spd C,F^1(A))\otimes H^0(\Spd C,F^1(B)),
\end{align*}
respectively.
The second vertical arrow from the top on the right is induced by the isomorphism (\ref{eqn:FSF2F1F1}).

Let $(\overline{x}\boxtimes \overline{y})_{(\Spd C)^2}$ be the image of $\overline{x}\boxtimes \overline{y}$ under the homomorphism
\begin{align*}
&H^i(\Gr_{G,\Spd C},A)\otimes H^j(\Gr_{G,\Spd C},B)\\
&\to H^0(\Spd C,R^i\pi_*A)\otimes H^0(\Spd C,R^j\pi_*B)\\
&\subset H^0(\Spd C,F^1(A))\otimes H^0(\Spd C,F^1(B))\\
&\overset{-\boxtimes -}{\to}H^0((\Spd C)^2,F^1(A)\boxtimes F^1(B)).
\end{align*}
By the above diagram and \textbf{Claim 2}, it follows that the restriction of $\overline{x\ast y}\in H^0((\Spd C)^2, F^2(A\ast B))$ to $H^0((\Spd C)^2_{\Wdisj},j^*F^2(A\ast B))$
is equal to the restriction of $(\overline{x}\boxtimes \overline{y})_{(\Spd C)^2}$ to $H^0((\Spd C)^2,j^*(F^1(A)\boxtimes F^1(B)))$ under the isomorphism (\ref{eqn:FSF2F1F1}).
Thus by Corollary \ref{cor:ffjneq}, 
\[
\Delta_{(\Spd C)^2}^*(\overline{x\ast y})\in H^0(\Spd C, \Delta^*(F^2(A\ast B)))\cong F^1(A\ast B)
\]
is equal to 
\[
\Delta_{(\Spd C)^2}^*((\overline{x}\boxtimes \overline{y})_{(\Spd C)^2})\in H^0(\Spd C, \Delta^*(F^1(A)\boxtimes F^2(B)))\cong F^1(A)\otimes F^1(B)
\]
under (\ref{fml:FS}).
It is easy to show that the restriction of $(\overline{x}\boxtimes \overline{y})_{(\Spd C)^2}$ to 
\begin{align*}
H^0(\Spd C,\Delta_{(\Spd C)^2}^*(F^1(A)\boxtimes F^1(B)))&\cong H^0(\Spd C,F^1(A)\otimes F^1(B))\\
&\cong H^*(\Gr_G,A)\otimes H^*(\Gr_G,B).
\end{align*}
is $\overline{x}\otimes \overline{y}$.
This implies that $\Delta^*_{\Gr}(\overline{x\ast y})$ corresponds to $\overline{x}\otimes \overline{y}$ under the isomorphism (\ref{fml:FS}).
From this and \textbf{Claim 1}, it follows that the map (\ref{fml:compo}) can be written as
\[
1\otimes x\otimes y\mapsto \overline{x}\otimes \overline{y},
\]
and the theorem follows.
\end{proof}
\bibliographystyle{test}
\bibliography{twomonoidalbib}
\end{document}